\renewcommand{\b}{\beta}
\renewcommand{\d}{\delta}
\newcommand{\D}{\Delta}
\newcommand{\n}{\nu}
\newcommand{\om}{\omega}
\newcommand{\OM}{\Omega}
\renewcommand{\r}{\rho}
\renewcommand{\t}{\tau}
\renewcommand{\th}{\theta}
\newcommand{\f}{\varphi}
\newcommand{\F}{\Phi}
\newcommand{\y}{\eta}
\newcommand{\p}{\psi}
\renewcommand{\P}{\Psi}
\newcommand{\C}{{\mathbb C}}
\newcommand{\N}{{\mathbb N}}
\newcommand{\R}{{\mathbb R}}
\newcommand{\Pb}{{\mathbb P}}
\newcommand{\Bc}{{\mathcal B}}
\newcommand{\Cc}{{\mathcal C}}
\newcommand{\Dc}{{\mathcal D}}
\newcommand{\Hc}{{\mathcal H}}
\newcommand{\Vc}{{\mathcal V}}
\newcommand{\curl}{{\rm curl}\,}
\newcommand{\diver}{{\rm div}\,}
\newcommand{\pd}{\partial}
\newcommand{\df}[2]{\frac{{\rm d} {#1}}{{\rm d} {#2}}}
\newcommand{\supp}{\operatorname{supp\,}}
\newcommand{\loc}{\operatorname{{loc}}}
\newtheorem{theorem}{Theorem}[section]
\newtheorem{proposition}[theorem]{Proposition}
\newtheorem{lemma}[theorem]{Lemma}
\newtheorem{definition}[theorem]{Definition}
\theoremstyle{remark}
\newtheorem{remark}[theorem]{Remark}
\numberwithin{equation}{section}
\newcommand{\na}{\nabla}
\begin{document}


\title{3D viscous incompressible fluid around one thin obstacle}
\author{C. Lacave}

\address[C. Lacave]{Universit\'e Paris-Diderot (Paris 7)\\
Institut de Math\'ematiques de Jussieu - Paris Rive Gauche\\
UMR 7586 - CNRS\\
B\^atiment Sophie Germain \\
Case 7012\\
75205 PARIS Cedex 13\\
France.} \email{lacave@math.jussieu.fr}


\begin{abstract}
In this article, we consider Leray solutions of the Navier-Stokes equations in the exterior of one obstacle in 3D and we study the asymptotic behavior of these solutions when the obstacle shrinks to a curve or to a surface. In particular, we will prove that a solid curve has no effect on the motion of a viscous fluid, so it is a removable singularity for these equations.
\end{abstract}

\maketitle

\section{Introduction}

The present article is devoted to the stability of the Navier-Stokes equations when one obstacle shrinks to a curve or a surface, and to determine the influence of a thin obstacle on the motion of a three-dimensional incompressible viscous flow. 
More precisely, for any  {\it obstacle} $\Cc_n$, i.e. verifying 
\begin{equation}\label{patate}
\text{$\Cc_n$ is a compact subset of $\R^3$ such that $\R^3\setminus \Cc_n$ is simply connected,}
\end{equation}
we consider the 3D-Navier-Stokes equations on $\OM_n:=\R^3\setminus \Cc_n$:
\begin{equation}\label{NS1}
\partial_t u^n -\nu \Delta u^n+u^n \cdot\nabla u^n=-\nabla p^n \quad \forall (t,x)\in (0,\infty)\times \OM_n,
\end{equation}
where $u^n=(u_1^n,u_2^n,u_3^n)$ denotes the velocity, $p^n$ the pressure and $\nu$ the viscosity. The incompressibility and the no-slip boundary conditions reads
\begin{equation}\label{NS2}
\diver u^n = 0 \quad \forall (t,x)\in [0,\infty)\times \OM^n, \qquad u^n=0  \quad \forall (t,x)\in (0,\infty)\times \pd\OM^n.
\end{equation}

A natural quantity for incompressible flows is the vorticity:
\begin{equation*}
\omega^n:=\curl u^n = (\partial_2 u_3^n-\partial_3u_2^n,\partial_3 u_1^n-\partial_1u_3^n,\partial_1 u_2^n-\partial_2u_1^n).
\end{equation*}
As the domains $\OM_n$ depend on $n$, it is standard  to give an initial condition in terms of a vorticity independent of $n$ (see \cite{ift_lop_NS,lac_NS,ift_kell}): for any smooth initial vorticity $\om_0$ which is divergence free and compactly supported in $\R^3$,  Lemma \ref{coro} states that there exists a unique vector field $u_0^n$ on $\OM_n$ such that:
\begin{equation}\label{u0n}
\diver u_0^n = 0, \quad \curl u_0^n = \om_0\vert_{\OM_n}, \quad u_0^n\cdot \hat n\vert_{\pd \Cc_n}=0, \quad u_0^n \in L^2(\OM_n).
\end{equation}
For such an initial velocity, it is well known that there exists a global weak solution $u^n$ of the Navier-Stokes equations \eqref{NS1}-\eqref{NS2} on $\OM_n$ in the  sense of Leray (see Definition \ref{defNS}). 

{\it The purpose of this paper is to study the asymptotic behavior of $u^n$ when $\Cc_n$ shrinks to a curve or a surface}.

\subsection{Leray solutions}

In order to give precisely the main theorems, we recall here the notion of weak solution in the sense of Leray.
We begin by introducing the classical solenoidal vector fields spaces.
\begin{definition} \label{spaces} 
Let $\OM$ an open subset of $\R^3$. 
We denote by 
\begin{itemize}
\item $V (\OM):= \Bigl\{ \varphi \in C_0^\infty(\OM) \: \vert \: \diver \varphi = 0 \text{ in } \OM \Bigl\}$;
\item $\Hc(\OM)$ the closure of $V (\OM)$ in the norm $L^2$;
\item $\Vc(\OM)$ the closure of $V (\OM)$ in the norm $H^1$, and its dual space by $\Vc'(\OM)$;
\item $\displaystyle H_2(\OM):= \Bigl\{ \varphi \in L^2(\OM) \: \vert \: \diver \varphi = 0 \text{ in } \OM,\: \varphi\cdot n =0 \text{ at } \pd\OM \Bigl\}$\footnote{$\varphi\cdot n$ should be understood in $H^{-1/2}(\pd \OM)$ (see e.g. \cite[Theo III.2.2]{galdi}).} ;
\item $\displaystyle G(\OM):= \Bigl\{ w \in L^2(\OM) \: \vert \: w = \na p \text{, for some } p\in H^1_{\loc}(\OM) \Bigl\}$;
\end{itemize}
\end{definition}

For any arbitrary domain $\OM$ in $\R^3$, we know that $G(\OM)$ and $\Hc(\OM)$ are orthogonal subspaces in $L^2(\OM)$ (see e.g.  \cite[Theo III.1.1]{galdi}). Moreover
\begin{equation}\label{leray projection}
 L^2(\OM) = G(\OM) \oplus \Hc(\OM),
\end{equation}
which implies  the existence of a unique projection operator (called Leray projection):
\[ \Pb_{\OM} \: : \: L^2(\OM) \to \Hc(\OM),\]
and by orthogonality we have:
\[ \| \Pb_{\OM} u \|_{L^2} \leq \|u \|_{L^2}, \quad \forall u \in L^2(\OM).\]

Let us also mention that \eqref{leray projection} implies the existence and uniqueness of a solution of \eqref{u0n}:
\begin{lemma}\label{coro}
Let $\Cc_n$ be a smooth obstacle (in the sense of \eqref{patate}) and $\OM_n=\R^3\setminus \Cc_n$. If $\om_0 \in V(\R^3)$, then there exists a unique solution of \eqref{u0n}. Moreover, 
\begin{equation}\label{Pnv0}
 u_0^n = \Pb_{\OM_n} v_0,
\end{equation}
where $v_0$ is  the Biot-Savart law in $\R^3$:
\begin{equation} \label{v_0_3D}
v_0(x)=-\int_{\R^3} \frac{x-y}{4\pi |x-y|^3}\times \om_0(y)\, dy,
\end{equation}
\end{lemma}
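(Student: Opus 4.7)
The plan is to define $u_0^n := \Pb_{\OM_n} v_0$ with $v_0$ the full-space Biot--Savart field in \eqref{v_0_3D}, and to verify that all four conditions of \eqref{u0n} hold; uniqueness will then follow from a short orthogonality argument using \eqref{leray projection}.

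First I would check that the restriction $v_0\vert_{\OM_n}$ lies in $L^2(\OM_n)$, so that applying $\Pb_{\OM_n}$ makes sense. Since $\om_0 \in V(\R^3)$ is smooth and compactly supported, a direct estimate on the Biot--Savart kernel gives $v_0(x) = O(|x|^{-2})$ at infinity (with smoothness inside), which is square-integrable in $\R^3$, and differentiation under the integral yields $\diver v_0 = 0$ and $\curl v_0 = \om_0$ in $\R^3$. The Leray decomposition \eqref{leray projection} now writes
\[
v_0\vert_{\OM_n} \;=\; u_0^n + \na p,\qquad u_0^n \in \Hc(\OM_n),\ \na p \in G(\OM_n).
\]
By the very definition of $\Hc(\OM_n)$ as the $L^2$-closure of $V(\OM_n)$, one reads off $u_0^n \in L^2(\OM_n)$, $\diver u_0^n = 0$, and $u_0^n \cdot \hat n = 0$ on $\pd \Cc_n$ in the $H^{-1/2}$ sense of the footnote. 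Taking curl of the displayed identity and using that $\curl \na p = 0$ in the distributional sense gives $\curl u_0^n = \curl v_0 = \om_0\vert_{\OM_n}$. This establishes existence and formula \eqref{Pnv0}.

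For uniqueness, let $w$ be the difference of two solutions of \eqref{u0n}: then $w \in L^2(\OM_n)$, $\diver w = 0$, $\curl w = 0$, and $w \cdot \hat n|_{\pd\Cc_n} = 0$. Since $\OM_n$ is simply connected by \eqref{patate}, the curl-free condition produces a scalar potential $\f$ with $w = \na \f$, and $\f \in H^1_{\loc}(\OM_n)$; thus $w \in G(\OM_n)$. At the same time the vanishing of the divergence and of the normal trace place $w$ in $H_2(\OM_n)$, which for exterior domains with smooth boundary coincides with $\Hc(\OM_n)$ (cf.\ \cite{galdi}). The orthogonality $G(\OM_n) \perp \Hc(\OM_n)$ in \eqref{leray projection} then forces $w = 0$.

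The main technical obstacle I expect is the last identification $H_2(\OM_n) = \Hc(\OM_n)$, which is what turns ``$L^2$ divergence-free with no-penetration'' into a genuine element of the closure of the smooth compactly supported divergence-free fields. If one prefers to avoid quoting this fact, the alternative is a Liouville-type argument on the harmonic potential $\f$: integrate $|\na\f|^2$ over $\OM_n \cap B_R$, kill the obstacle boundary term via $\pd_n \f = 0$, and let $R \to \infty$, which requires the decay estimates $\f - c = O(|x|^{-1})$ and $\na\f = O(|x|^{-2})$ obtained from $\na\f \in L^2$ of an exterior harmonic function via the standard representation formula. Both routes are routine once set up, but each has a single nontrivial ingredient about the geometry at infinity.
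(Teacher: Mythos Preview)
Your proof is correct and follows essentially the same route as the paper's: both define $u_0^n := \Pb_{\OM_n} v_0$, use the Leray decomposition to verify the curl condition, and obtain uniqueness by writing the difference of two solutions as a gradient (via simple connectedness) that also lies in $\Hc(\OM_n)=H_2(\OM_n)$, forcing it to vanish by the orthogonality in \eqref{leray projection}. The paper's version is terser---it relegates the verification that $v_0\in L^2(\R^3)$ to the remark preceding the lemma and cites \cite[Theo~III.2.2]{galdi} for $\Hc(\OM_n)=H_2(\OM_n)$ without offering your alternative Liouville argument---but the logical skeleton is identical.
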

In \eqref{v_0_3D}, $\times$ denotes the standard cross product of vectors in $\R^3$. We note that $v_0$ is the unique vector field in $\R^3$ verifying:
\begin{equation*}
\diver v_0 = 0, \quad \curl v_0 = \om_0,  \quad v_0 \in L^2(\R^3).
\end{equation*}
For sake of completeness, this lemma is proved in Appendix \ref{app coro}.

Now we can give the definition of a global weak solution of the Navier-Stokes equations in the Leray sense.

\begin{definition}\label{defNS} Let $u_0 \in \Hc(\OM)$. We say that $u$  is a global weak solution of the Navier-Stokes equations on  $\OM$ with initial velocity $u_0$ iff 
\begin{itemize}
\item $u$ belongs to
\[ C([0,\infty);\Vc'(\OM)) \cap L^\infty_{\loc}([0,\infty); \Hc(\OM)) \cap L^2_{\loc}([0,\infty);\Vc(\OM)) ;\]
\item $u$ verifies the momentum equation in the sense of $\Vc'(\OM)$, i.e. $\forall \p\in C^1([0,\infty); \Vc(\OM))$, we have for all $t$:
\begin{equation}\label{eq3D}
\int_{\OM} (u\cdot \p)(t,x)\, dx+  \int_0^t \int_{\OM}( -u\cdot \p_t+ \nu \na u : \na \p - (u\otimes u ): \na \p)(t',x) \,dx\,dt'=\int_{\OM} u_0\cdot \p(0,\cdot);
\end{equation}
\item $u$ verifies the energy inequality:
\begin{equation}\label{est_vit}
\|u(t)\|^2_{L^2(\OM)} + 2\nu \int_0^t \|\na u(t)\|^2_{L^2(\OM)} \leq \|u_0\|^2_{L^2(\OM)}\ \ \forall t\geq 0.
\end{equation}
\end{itemize}
\end{definition}

Without any assumption about the regularity of $\OM$, the Leray theorem states that there exists a global weak solution of the Navier-Stokes equations in the sense of the previous definition (see e.g. \cite[Theo 2.3]{gallagher} and \cite[Theo III.3.1]{temam}). 

\subsection{Main results}

In Section \ref{sect 2}, we establish that the Navier-Stokes equations is structurally stable under Hausdorff approximations of the fluid domains:

\begin{theorem}\label{main 1}
Let $\Cc$ be an obstacle of $\R^3$ (in the sense of \eqref{patate}) which is a limit (in the Hausdorff sense) of  a sequence of smooth obstacles $\{ \Cc_n \}$. Let $\om_0\in V(\R^3)$  and $u^n$ be a global weak solution to the Navier-Stokes equations on $\OM_n=\R^3\setminus\Cc_n$ (in the sense of Definition \ref{defNS}) with initial velocity $u_0^n$ (given by \eqref{Pnv0}, which is the solution of \eqref{u0n}). Then we can extract a subsequence such that $Eu^n$ converges weakly-$*$ to $u$ in $L^\infty(\R^+;L^2(\OM))$, where $u$ is a global weak solution of the Navier-Stokes equations on $\OM:=\R^3\setminus \Cc$ with initial velocity $u_0=\Pb_{\OM} v_0$.
\end{theorem}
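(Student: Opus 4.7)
My plan is the standard weak-compactness scheme for Navier--Stokes combined with a test-function argument adapted to Hausdorff approximation of the fluid domain.

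\emph{Uniform bounds.} Since the Leray projection is an $L^2$-contraction, $\|u_0^n\|_{L^2(\OM_n)}\le \|v_0\|_{L^2(\R^3)}$, and the energy inequality \eqref{est_vit} gives a uniform bound on $u^n$ in $L^\infty(\R^+;L^2(\OM_n))\cap L^2(\R^+;H^1(\OM_n))$. Because $u^n$ vanishes on $\pd\OM_n$, extension by zero preserves these bounds on the whole of $\R^3$, so a diagonal extraction yields $Eu^n \rightharpoonup u$ weakly-$*$ in $L^\infty(\R^+;L^2(\R^3))$ and weakly in $L^2_{\loc}(\R^+; H^1(\R^3))$.

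\emph{Local compactness and passing to the limit.} The Hausdorff hypothesis enters here: for any compact $K\Subset \OM$ one has $K\subset \OM_n$ for all $n$ large. Testing \eqref{eq3D} against $\varphi\in V(\OM)$ with $\supp\varphi\subset K$ produces a uniform bound on $\partial_t u^n$ in, say, $L^{4/3}(0,T; H^{-1}(K))$ via the 3D estimate $\|u^n\otimes u^n\|_{L^2}\lesssim \|u^n\|_{L^2}^{1/2}\|\na u^n\|_{L^2}^{3/2}$. Aubin--Lions then gives, after further extraction, $u^n \to u$ strongly in $L^2_{\loc}(\R^+\times \OM)$. For $\varphi \in C^1([0,\infty); V(\OM))$ (whose support is compact in $\OM$), every term in \eqref{eq3D} passes to the limit: linear terms by weak convergence, the convective term by the strong--weak pairing on $\supp\varphi$. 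Density of $V(\OM)$ in $\Vc(\OM)$ extends the identity to the full class of test functions, and weak lower semi-continuity transfers \eqref{est_vit} from $u^n$ to $u$, once the initial data match.

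\emph{Main obstacle.} The most delicate step is the strong convergence of the initial data $\Pb_{\OM_n}v_0 \to \Pb_\OM v_0$ in $L^2(\R^3)$: this is a Mosco-type stability of the Helmholtz decomposition under Hausdorff perturbations, where the simple-connectedness assumption \eqref{patate} is essential to uniquely pin down the gradient component. A secondary, subtler point is to verify that $u(t,\cdot)\in \Vc(\OM)$ rather than merely in $\Vc(\R^3)$: since $Eu^n=0$ on $\Cc_n$ and $\Cc_n\to\Cc$ in Hausdorff distance, $u$ should vanish on $\Cc$ in the trace sense, but whether this actually constrains $u$ hinges on the capacity of $\Cc$. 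This is precisely the mechanism by which a curve behaves as a removable singularity while a surface does not.
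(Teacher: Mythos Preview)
Your outline is broadly correct in spirit, but the local compactness step contains a genuine gap. Testing \eqref{eq3D} against divergence-free $\varphi$ supported in $K\Subset\OM$ does \emph{not} give you a bound on $\partial_t u^n$ in $L^{4/3}(0,T;H^{-1}(K))$; it only controls $\partial_t u^n$ in the dual $\Vc'(K)$ of the divergence-free space. The pressure $\nabla p^n$ is invisible to such test functions, and there is no obvious uniform-in-$n$ estimate on $\nabla p^n$ in $H^{-1}$ of a subdomain of the varying exterior domains $\OM_n$. You therefore cannot run Aubin--Lions with the triple $H^1(K)\hookrightarrow L^2(K)\hookrightarrow H^{-1}(K)$. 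Replacing $H^{-1}(K)$ by $\Vc'(K)$ does not help either: the natural map $\{v\in L^2(K):\diver v=0\}\to\Vc'(K)$ is not injective (every $\nabla q$ with $q$ harmonic is sent to zero), so the intermediate embedding required by Aubin--Lions fails. This is precisely the obstruction the paper singles out.

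The paper's remedy is to work on a smooth bounded open set $O\Subset\OM$ and split $u^n|_O=\Pb_O u^n+\nabla q^n$ via the Leray projection on $O$. Since $\langle\partial_t\nabla q^n,\varphi\rangle=0$ for $\varphi\in V(O)$, the equicontinuity estimate in $\Vc'(O)$ transfers entirely to $\Pb_O u^n$, and now $\Pb_O u^n\in\Hc(O)$, which \emph{does} embed compactly into $\Vc'(O)$. Ascoli plus interpolation with the $L^2(0,T;\Vc(O))$ bound then yields strong $L^2$ compactness of $\Pb_O u^n$ only. To pass to the limit in the nonlinearity one expands $(u^n\otimes u^n):\nabla\psi$ into four pieces; the $\nabla q^n\otimes\nabla q^n$ term vanishes identically because $\diver(\nabla q^n\otimes\nabla q^n)=\tfrac12\nabla|\nabla q^n|^2$ is a gradient, and the mixed terms go through by weak--strong pairing. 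Your proposal would be complete once this decomposition is inserted; the remaining points (initial data via Proposition~\ref{u_0_conv_3D}, energy inequality by lower semicontinuity, $u(t,\cdot)\in\Vc(\OM)$ from $\OM_n\subset\OM$ and weak closedness of $\Vc(\OM)$) are handled essentially as you sketch.
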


Here, $E u^n$ is the extension of $u^n$ on $\R^3$, vanishing on $\Cc_n$, and $\Cc_n$ converges to $\Cc$ in the Hausdorff sense if and only if the Hausdorff distance between $\Cc_n$ and $\Cc$ converges to zero. See for example \cite[Appendix B]{GV_lac} for more details about the Hausdorff topology, in particular the Hausdorff convergence implies the following proposition: 
\begin{equation}\label{hausdorff}
\text{for any compact set }K\subset \Omega, \text{ there exists }n_{K}>0 \text{ such that }K\subset \Omega_{n}, \ \forall n\geq n_{K}.
\end{equation}
Actually, we will also show that for any sequence $u_0^n\in \Hc(\OM_n)$ which verifies $Eu_0^n \to u_0 \in \Hc(\OM)$ (for the $L^2$ norm), then we can extract a subsequence such that $Eu^n$ converges weakly-$*$ to $u$ in $L^\infty(\R^+;L^2(\OM))$, where $u$ is a global weak solution of the Navier-Stokes equations on $\OM=\R^3\setminus \Cc$ with initial velocity $u_0$. Of course, to pass to the limit in the non-linear term, we will need a strong compactness argument in $L^2_{\loc}(\R^+\times \OM)$. However, the precise statement is not very convenient to give  here. Indeed, we will decompose the velocity in two parts (depending on the compact subset of $\OM$) and we will prove the strong compactness only of one part of $u^n$ (see Subsection \ref{time evolution} for more details).

\medskip

More importantly, we wonder for which condition $\Cc$ is removable, i.e. $u$ is the solution of the Navier-Stokes on the full space $\R^3$. Such an issue presents a large literature on experiments and simulations (see  e.g. \cite{phy_1,phy_2,phy_3,phy_4,phy_5,phy_6}  and references therein). As the solution $u$ belongs to $H^1_0(\OM)$ for a.e. time, the natural notion is the Sobolev $H^1$ capacity of the obstacle, which is defined by 
$$ {\rm cap}(\Cc) \: := \: \inf \{ \| v \|^2_{H^1(\R^N)}, \: v \ge 1 \: \mbox{ a.e.    in a neighborhood of } \Cc\}.
$$
The capacity is not a measure, but has similar good properties. For nice sets  $E$ in $\R^N$, the capacity of $E$ can be thought very roughly as some $n-1$ dimensional Hausdorff measure of its boundary. More precisely:
\begin{enumerate}
\item For all compact  set $K$ included in a bounded open set $D$, \\ 
$\: {\rm cap}(K) = {\rm cap}(\partial K)$.
\item If $E \subset \R^N$ is contained in a manifold of dimension $N-2$, then ${\rm cap}(E) = 0$.
\item  If $E \subset \R^N$ contains a piece of some smooth hypersurface (manifold of dimension N-1), then ${\rm cap}(E) > 0$. 
\item Let $D$ and $\OM$ be open sets such that $\OM \subset D$. Then
\[ \Bigl( v\in H^1_0(\OM)\Bigl) \iff \Bigl( v\in H^1_0(D) \text{ and } v = 0 \text{ quasi everywhere in } D\setminus \OM \Bigl),\]
which means that $v=0$ except on a set with zero capacity.
\end{enumerate}
We refer to \cite{henrot} for all details on the Sobolev capacity (see \cite[Appendix A]{GV_lac} for a short summary).

If $\Cc$ is a compact subset of $\R^3$ which contains  a piece of smooth hypersurface, then we infer from (3) and (4) that for a.e. $t$, $u$ vanishes quasi everywhere in $\Cc$, where ${\rm cap}(\Cc)>0$. Therefore, a surface is not removable  for the 3D viscous fluid, as a curve for the 2D viscous fluid (case treated in \cite{lac_NS}).

Then, we turn to the  obstacles with zero capacity. Iftimie, Lopes Filho and Nussenzveig Lopes have considered in \cite{ift_lop_NS} the 2D case  where one obstacle shrinks homotetically to a point, whereas Iftimie and Kelliher show in \cite{ift_kell} that a point in 3D has no influence on a viscous fluid.
There is a small restriction in dimension two, due to the fact that the exterior of one obstacle is not simply connected. In this case the vorticity is not sufficient to determine uniquely the velocity, and we have also to prescribe the initial circulation of $u_0^n$ around $\Cc_n$. Nevertheless, if this initial circulation is assumed to be zero, \cite{ift_lop_NS} exactly states that a material point has no effect on a 2D fluid (see \cite{ift_lop_NS} for more details).

Section \ref{sect curve} is devoted to prove that a curve is a removable singularity for the 3D Navier-Stokes equations.

\begin{theorem}\label{main 2}
Let $\Cc$ be a $C^2$ injective compact curve of $\R^3$, and $\om_0\in V(\R^3)$. Let $\{ \Cc_n \}$ a family of smooth obstacles (in the sense of \eqref{patate}) converging to $\Cc$ (in the Hausdorff sense), such that $\Cc\subset \Cc_n$. 
Let $u^n$ be a global weak solution to the Navier-Stokes equations on $\OM_n=\R^3\setminus\Cc_n$  with initial velocity $u_0^n$ (given by \eqref{Pnv0}), then we can extract a subsequence such that $Eu^n$ converges weakly-$*$ to $u$ in $L^\infty(\R^+;L^2(\R^3))$, where $u$ is a global weak solution of the Navier-Stokes equations on $\R^3$ with initial velocity $v_0$.
\end{theorem}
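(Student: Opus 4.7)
The plan is to combine Theorem \ref{main 1} with the classical fact that a smooth one-dimensional set has zero $H^1$-capacity in $\R^3$. First I would apply Theorem \ref{main 1} to the Hausdorff-approximating sequence $\{\Cc_n\}$: along a subsequence, $Eu^n$ converges weakly-$*$ to some $u$ in $L^\infty(\R^+;L^2(\OM))$, where $u$ is a Leray solution of the Navier-Stokes equations on $\OM=\R^3\setminus\Cc$ with initial velocity $u_0=\Pb_\OM v_0$. It then remains to upgrade $u$, extended by zero across the measure-zero set $\Cc$, to a Leray solution on the full space $\R^3$ with initial data $v_0$.

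Since $\Cc$ is a $C^2$ injective curve in $\R^3$, it is contained in a $1$-dimensional manifold, and property (2) recalled in the introduction yields $\mathrm{cap}(\Cc)=0$. From this I would derive the two identifications $\Hc(\OM)=\Hc(\R^3)$ and $\Vc(\OM)=\Vc(\R^3)$. Granted these, the rest is immediate: $v_0\in\Hc(\R^3)=\Hc(\OM)$ forces $u_0=\Pb_\OM v_0=v_0$; the weak formulation \eqref{eq3D} on $\OM$ is automatically the weak formulation on $\R^3$, since the admissible test functions in $C^1([0,\infty);\Vc(\R^3))$ coincide with those in $C^1([0,\infty);\Vc(\OM))$; and the energy inequality transfers because $\Cc$ has zero Lebesgue measure, so $\|u\|_{L^2(\OM)}=\|u\|_{L^2(\R^3)}$ and similarly for the gradient.

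The core step is the $H^1$-density needed for $\Vc(\OM)=\Vc(\R^3)$, namely showing that every $\f\in V(\R^3)$ is the $H^1(\R^3)$ limit of a sequence in $V(\OM)$. Zero capacity provides cutoffs $\chi_\e\in C^\infty_0(\R^3)$ that vanish in a neighborhood of $\Cc$, with $\chi_\e\to 1$ in $L^2_{\loc}$ and $\|\na\chi_\e\|_{L^2}\to 0$. The product $\chi_\e\f$ converges to $\f$ in $H^1$ but fails to be divergence-free, since $\diver(\chi_\e\f)=\f\cdot\na\chi_\e$. To restore solenoidality I would apply the Bogovskii operator to construct a corrector $\psi_\e$, compactly supported in $\OM$, with $\diver\psi_\e=-\f\cdot\na\chi_\e$ and $\|\psi_\e\|_{H^1}\le C\|\f\cdot\na\chi_\e\|_{L^2}\to 0$. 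Then $\chi_\e\f+\psi_\e\in V(\OM)$ approximates $\f$ in $H^1(\R^3)$.

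The main obstacle I anticipate is obtaining the Bogovskii estimate with a constant uniform in $\e$: a naive cutoff concentrates $\mathrm{supp}\,\na\chi_\e$ in a thin tube around $\Cc$, on which the Bogovskii constant can blow up. A workable fix is to cover $\Cc$ by finitely many charts in which the curve looks like a straight segment, choose $\chi_\e$ factorising in these charts so that $\mathrm{supp}\,\na\chi_\e$ lies in a finite union of balls or cylinders with bounded eccentricity, and apply Bogovskii separately on each such piece where the geometric constant is controlled. This localised construction should deliver the required $H^1$-approximation and close the argument.
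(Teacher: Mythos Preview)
Your approach is correct and shares the paper's central device---correcting the divergence of a cut-off test function via the Bogovskii operator---but the packaging differs in instructive ways. The paper does not cite Theorem~\ref{main 1} as a black box: it reruns the compactness argument of Section~\ref{sect 2}, proves $Eu_0^n\to v_0$ by a separate stream-function estimate (Proposition~\ref{u_0_conv_5.3}), and only then extends the weak formulation \eqref{mom 1} across $\Cc$ by an approximation lemma for divergence-free test functions. In that lemma the paper uses the explicit distance cutoff $\y_n$ of Proposition~\ref{cutoff est}, for which $\|\na\y_n\|_{L^2}$ is merely \emph{bounded}; after Bogovskii correction on the fixed domain $B\setminus\Cc$ the approximating test functions converge only weak-$*$ in $L^\infty(\R^+;H^1)$, which is enough because the remaining factors in \eqref{mom 1} lie in $L^1_{\loc}(\R^+;L^2)$. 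Your route through the identities $\Hc(\OM)=\Hc(\R^3)$ and $\Vc(\OM)=\Vc(\R^3)$ is more conceptual and delivers $u_0=\Pb_\OM v_0=v_0$ for free, without a separate initial-data computation.

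Your anticipated obstacle, however, is a phantom, and your proposed workaround carries its own defect. You need not run Bogovskii on a shrinking tube: apply it once on the \emph{fixed} domain $\widetilde\OM=B\setminus\Cc$, which satisfies the cone condition (a one-dimensional $C^2$ curve removed from a ball leaves room for a uniform cone at every boundary point). The constant $c_2(\widetilde\OM)$ is then independent of $\e$, and since your capacity cutoffs satisfy $\|\na\chi_\e\|_{L^2}\to 0$, you obtain $\|\psi_\e\|_{H^1}\le c_2\|\f\cdot\na\chi_\e\|_{L^2}\to 0$ and hence \emph{strong} $H^1$ approximation---actually sharper than what the paper establishes. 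By contrast, your chart-by-chart localisation would force the mean-zero hypothesis $\int g=0$ on each individual piece, which does not hold without further adjustment.
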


Even if this theorem is the natural extension of \cite{ift_kell,ift_lop_NS}, the proof requires a new way to cut-off divergence free test functions. In the two previous articles, the classical cutting-off of stream function was sufficient to get the stability result. Such a method does not hold in our case, and we will introduce a non-explicit approximation, based on Bogovskii results.

\medskip

{\it Notation:} for any function $f$ defined on $\OM_n$, we denote by $E f$ the extension of $f$ on $\R^3$, vanishing on $\Cc_n$. If $f$ is regular enough and vanishes on $\pd \Cc_n$, then $\na (E f)=E(\na f)$. Similarly, if $v$ is a vector field regular enough and tangent to the boundary, then $\diver Ev = E(\diver v)$.

\bigskip

\section{Stability under Hausdorff approximations}\label{sect 2}

\subsection{Convergence of the initial velocity}

Let $\Cc$ be an obstacle of $\R^3$ (in the sense of \eqref{patate}) and $\{\Cc_n\}$ be a family of smooth obstacles converging to $\Cc$ in the Hausdorff topology when $n\to \infty$ and such that $\Cc \subset \Cc_n$.
As mentioned in the introduction, we fix $\om_0\in V(\R^3)$ and we define $u_0^n$ as in \eqref{Pnv0}, which is the unique vector field verifying \eqref{u0n}. Let us show that $u_0^n$ converges strongly to $u_0:=\Pb_{\OM} v_0$. 

\begin{proposition}\label{u_0_conv_3D} 
With the above notations, we have that 
\[ E{u_0^n}\to u_0 \text{ strongly in } L^2(\OM).\]
\end{proposition}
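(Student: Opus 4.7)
The plan is to extract a weakly convergent subsequence of $\{Eu_0^n\}$ in $L^2(\R^3)$, identify its limit as $u_0 = \Pb_\OM v_0$ via the uniqueness of the Helmholtz decomposition \eqref{leray projection}, and then upgrade to strong convergence by proving norm convergence.

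Since $\Pb_{\OM_n}$ has operator norm at most one and $\OM_n\subset\R^3$, we have $\|Eu_0^n\|_{L^2(\R^3)}=\|u_0^n\|_{L^2(\OM_n)}\le\|v_0\|_{L^2(\R^3)}$, so after extraction $Eu_0^n\rightharpoonup w$ weakly in $L^2(\R^3)$. I would write the Leray decomposition on $\OM_n$ as $v_0|_{\OM_n}=u_0^n+\na p^n$ with $u_0^n\in\Hc(\OM_n)$ and $\na p^n\in G(\OM_n)$.

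To identify $w|_\OM=u_0$ I would proceed in two steps. First, given $\na q\in G(\OM)$, since $\OM_n\subset\OM$ we have $q\in H^1_{\loc}(\OM_n)$ so that $\na q|_{\OM_n}\in G(\OM_n)$; the orthogonality $u_0^n\perp G(\OM_n)$ then gives $\int_\OM Eu_0^n\cdot\na q=\int_{\OM_n}u_0^n\cdot\na q=0$, and passing to the weak limit yields $w|_\OM\perp G(\OM)$, hence $w|_\OM\in\Hc(\OM)$. Second, for any $\f\in V(\OM)$ the support of $\f$ is compact in $\OM$, so by \eqref{hausdorff} it is contained in $\OM_n$ for $n$ large; then $\f\in V(\OM_n)$, integration by parts gives $\int_{\OM_n}\na p^n\cdot\f=0$, hence $\int_{\OM_n}u_0^n\cdot\f=\int_{\OM_n}v_0\cdot\f$, and passing to the limit produces $\int_\OM(v_0-w)\cdot\f=0$. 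By density of $V(\OM)$ in $\Hc(\OM)$ this gives $v_0-w\in\Hc(\OM)^\perp=G(\OM)$. Uniqueness of \eqref{leray projection} forces $w|_\OM=\Pb_\OM v_0=u_0$; since the limit does not depend on the subsequence, the whole sequence converges weakly.

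To upgrade to strong convergence, the orthogonality $u_0^n\perp\na p^n$ in $L^2(\OM_n)$ gives
\[
\|Eu_0^n\|_{L^2(\R^3)}^2=\|u_0^n\|_{L^2(\OM_n)}^2=\int_{\OM_n}u_0^n\cdot v_0=\int_\OM Eu_0^n\cdot v_0\longrightarrow\int_\OM u_0\cdot v_0=\|u_0\|_{L^2(\OM)}^2,
\]
the final equality using the analogous orthogonality for the Leray decomposition on $\OM$. Weak convergence together with convergence of norms yields strong convergence in $L^2$. The main obstacle is this two-sided identification of $w$: the inclusion $\OM_n\subset\OM$ is what lets gradient fields from $\OM$ restrict to elements of $G(\OM_n)$, while Hausdorff convergence via \eqref{hausdorff} is what lets compactly supported divergence-free test functions in $\OM$ be used inside $\OM_n$ eventually. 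Both directions are needed and neither would hold for an arbitrary sequence of fluid domains.
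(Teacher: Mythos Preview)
Your argument is correct, but it takes a genuinely different route from the paper's.

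The paper identifies the weak limit $w_0$ through its \emph{vorticity}: passing to the limit gives $\curl w_0 = \omega_0|_\OM$, and then Lemma~\ref{coro} applied on $\OM_n$ with $w_0$ in place of $v_0$ yields $u_0^n = \Pb_{\OM_n} w_0$, hence the reverse inequality $\|Eu_0^n\|_{L^2}\le \|w_0\|_{L^2}$. Combined with weak lower semicontinuity this gives norm convergence; membership of $w_0$ in $\Hc(\OM)$ is obtained \emph{a posteriori} from strong convergence and $V(\OM_n)\subset V(\OM)$, and the final identification $w_0=u_0$ again invokes the uniqueness in Lemma~\ref{coro}.

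You instead work entirely at the level of the Helmholtz decomposition \eqref{leray projection}: the inclusion $\OM_n\subset\OM$ lets you pull back gradients to show $w|_\OM\perp G(\OM)$, and Hausdorff convergence lets you push forward divergence-free test functions to show $v_0-w\perp\Hc(\OM)$; uniqueness of the splitting then pins down $w|_\OM=\Pb_\OM v_0$. Your norm-convergence identity $\|u_0^n\|^2=\langle u_0^n,v_0\rangle_{\OM_n}\to\langle u_0,v_0\rangle_\OM=\|u_0\|^2$ is also cleaner than the paper's two-sided squeeze.

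What each approach buys: yours avoids Lemma~\ref{coro} altogether, hence never invokes the simple-connectedness of $\OM_n$ or any curl computation, relying only on the abstract orthogonal decomposition. The paper's route makes the role of the prescribed vorticity $\omega_0$ explicit and reuses machinery already in place, which connects naturally to the curl discussion in Remark~\ref{rem curl}. Both proofs need the monotonicity $\Cc\subset\Cc_n$ (equivalently $\OM_n\subset\OM$) at the same structural point --- you to restrict gradients, the paper to embed $\Hc(\OM_n)\hookrightarrow\Hc(\OM)$ --- and both need \eqref{hausdorff} to test against compactly supported functions in $\OM$.
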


\begin{proof}  As the Leray projection is orthogonal in $L^2$, we get from \eqref{Pnv0} that 
\[ \| Eu_0^n \|_{L^2(\OM)}= \|u_0^n \|_{L^2(\OM_n)} \leq \| v_0 \|_{L^2(\OM_n)}\leq \| v_0 \|_{L^2(\OM)} .\]
By the Banach-Alaoglu's theorem, we infer that there exists $w_0 \in L^2(\OM)$ and a subsequence $n\to \infty$, such that 
\[ E u_0^n\rightharpoonup w_0 \text{ weak in } L^2(\OM).\] 
This weak convergence implies in particular that
\[  \| w_0 \|_{L^2}  \leq \liminf \| E u_0^n\|_{L^2},\]
and
\[ \curl w_0 = \om_0\vert_{\OM} \text{ in } \Vc'(\OM).\]
In the proof of Lemma \ref{coro} (see Appendix \ref{app coro}), we have proved that $\curl \mathbb{P}_{\OM_n} w_0 = \curl w_0 \vert_{\OM_n}=\om_0 \vert_{\OM_n}$. The uniqueness part of Lemma \ref{coro} implies that $ \mathbb{P}_{\OM_n} w_0=\mathbb{P}_{\OM_n} v_0=u_0^n$ and then
\[ \|E u_0^n\|_{L^2} \leq \| w_0 \|_{L^2}.\]
Putting together the two last inequalities, we get $\|E u_0^n\|_{L^2}\to \| w_0 \|_{L^2}$. Using the weak convergence in $L^2(\OM)$ of $Eu_0^n$ to $w_0$, we obtain the strong convergence in $L^2(\OM)$.

As $Eu_0^n$ belongs to $\Hc(\OM_n)$, we deduce directly from the $L^2$ strong convergence that $w_0$ belongs to $\Hc({\OM})$.  Then we have two functions in $\Hc({\OM})$ having the same vorticity, which implies that $u_0=w_0$, without assuming nothing about the regularity of $\Cc$ (see Appendix \ref{app coro}). The uniqueness also implies that the limit holds without extracting a subsequence.

\end{proof}

\begin{remark}\label{rem curl}
We have obtained in the previous proof that 
\[ \curl u_0= \om_0 = \curl v_0  \text{ in } \Vc'(\OM)\]
but we do not have
\[  \curl u_0 = \om_0 = \curl v_0  \text{ in } H^{-1}(\R^3).\]
Even in the case where $\Cc$ is a surface, we can just pretend that $\curl(u_0-v_0)$ belongs to $H^{-1}(\R^3)$ and is supported on the surface. Actually, we will prove in Subsection \ref{sect surface} that
\[ \curl u_0 = \om_0 + g_{\Cc} \d_{\Cc}\]
where $g_{\Cc}$ is the jump of the tangential component of $u_0$ and $\d_{\Cc}$ is the Dirac measure on $\Cc$.

In the case of the curve, we will show in Subsection \ref{sect initial curve} that there is no function belonging in $H^{-1}(\R^3)$ compactly supported on a curve, and we will obtain that $u_0=v_0$.\end{remark}

\subsection{Time evolution}\label{time evolution}

For all $n$, we denote by $u^n$ a global weak solution of the Navier-Stokes equations (in the sense of Definition \ref{defNS}) on $\OM_n=\R^3\setminus \Cc_n$ with initial data $u_0^n$.

By Proposition \ref{u_0_conv_3D}, we already know that $Eu_0^n \to u_0$ in $L^2(\OM)$. Moreover, thanks to the energy inequality  \eqref{est_vit}, we state that
\begin{equation}\label{estuni}
Eu^n \text{ is uniformly bounded in } L^\infty((0,\infty);\Hc_{\OM}) \text{ and } \na Eu^n \text{ is uniformly bounded in } L^2((0,\infty)\times \OM).
\end{equation}
Now, we need to establish a temporal estimate. First, we use that $u^n$ verifies the Dirichlet boundary condition for a.e. $t>0$ in order to write the following Sobolev inequality:
\begin{eqnarray}
\| u^n\|_{L^4(\OM_n)} &=& \| E u^n \|_{L^4(\R^3)} \leq \| E u^n\|_{L^2(\R^3)}^{1/4} \| Eu^n\|_{L^6(\R^3)}^{3/4} \nonumber\\
&\leq& C \| E u^n\|_{L^2(\R^3)}^{1/4} \| \na( Eu^n) \|_{L^2(\R^3)}^{3/4} = C \| u^n\|_{L^2(\OM_n)}^{1/4} \| \na u^n \|_{L^2(\OM_n)}^{3/4},  \label{sobolev}
\end{eqnarray}
where $C$ is independent of $n$. Let us consider $T>0$ and $O$ an open smooth bounded set relatively compact in $\OM$. By  \eqref{hausdorff}, there exists $n_O$ such that $O\cap \Cc_n=\emptyset$ for all $n\geq n_O$. Even if it is possible to show that $\{E u^n\}$ is equicontinuous in $\mathcal{V}'(O)$, we cannot deduce precompactness in $L^2([0,T]\times O)$: indeed there is no injection from $\{v\in L^2(O), \ \diver v=0\}$ to $\mathcal{V}'(O)$ (the gradients of harmonic function are divergence free, but their $\mathcal{V}'(O)$ norm are equal to zero). To use interpolation, we have to consider  $\Hc(O)$ which embeds in $\mathcal{V}'(O)$: the boundary condition forbids the gradients of harmonic function. Then, we write in $O$ 
\[
u^n = \mathbb{P}_{O} u^n + \nabla q^n, \quad \Delta q^n =0,
\]
with 
$$
\|  \mathbb{P}_{O} u_n \|_{L^2({O})}^2+ \|  \nabla q^n \|_{L^2({O})}^2 = \|  u^n \|_{L^2({O})}^2, \quad  \| \mathbb{P}_{O} u_n \|_{H^1({O})} + \|  \nabla q^n \|_{H^1({O})}\leq  C \|  u^n \|_{H^1({O})},$$
 which imply that $\nabla q^n$ converges to $\nabla q$ weak-$*$  in $L^\infty([0,T];L^2(O))$ and in $L^2([0,T];H^1(O))$. For $ \mathbb{P}_{O} u^n $, we perform a strong compactness argument as follows: for any $\F\in \mathcal{V} (O)$ and $n\geq n_O$:
\begin{eqnarray*}
|\langle \mathbb{P}_{O} u^n(t), \F\rangle-\langle \mathbb{P}_{O} u^n(s), \F\rangle |&=&\Bigl| \int_s^t\int_{\OM_n} -\nu \na u^n : \na \F + (u^n \otimes  u^n): \na \F\Bigl| \\
&\leq& \nu (t-s)^{1/2} \| \na u^n\|_{L^2(\R^+\times \OM_n)}  \| \na \F\|_{L^2} \\
&&+ (t-s)^{1/4} \| u^n \|_{L^\infty(\R^+,L^2(\OM_n))}^{1/2} \| \na u^n\|_{L^2(\R^+\times \OM_n)}^{3/2} \|\na \F\|_{L^2}\\
&\leq &C((t-s)^{1/2} + (t-s)^{1/4}) \| \F\|_{H^1},
\end{eqnarray*}
where we have used \eqref{sobolev}. This inequality implies that $\{\mathbb{P}_{O} u^n\}$ is equicontinous as a family of functions from $\R^+$ to $\Vc'(O)$. Using that this family is bounded in $L^\infty(\R^+;\Hc(O))$ and the compact embedding in $\Vc'(O)$, Ascoli theorem gives that $\{ \mathbb{P}_{O} u^n\}$ is precompact in $L^\infty((0,T);\Vc'(O))$. Moreover, $\{ \mathbb{P}_{O} u^n\}$ is also bounded in $L^2((0,T);\Vc(O))$, then we get by interpolation that this family is precompact in $L^2((0,T) \times O)$. By a diagonal extraction on the compact sets of $\OM$ and $[0,+\infty)$, we find a subsequence $Eu_n$ such that we have the following property: for any compact set $[0,T]\times K \subset [0,\infty)\times \OM$, then there exists $O$ a relatively compact set of $\OM$ belonging in the sequence where the diagonal extraction was considered, such that $K\subset O$ and $\mathbb{P}_{O} u_n$ is precompact in $L^2((0,T)\times K)$.

Moreover,  extracting again a subsequence if necessary, we know from \eqref{estuni} that  the limit verifies
\begin{equation}\label{u space}
u \in L^\infty((0,\infty);\Hc({\OM})) \text{ and } \na u \in L^2((0,\infty)\times \OM).
\end{equation}

For any test function $\p\in C^1(\R^+;\Vc(\OM))$, there exist $\p^k \in C^\infty_c((0,\infty)\times \OM)$, $\diver \p^k=0$ such that\footnote{see e.g. \cite[Prop 3.6]{lac_NS}.}
\[ \p^k \to \p \text{ strongly in } L^q_{\loc}(\R^+;\Vc(\OM)), \quad \forall q\in(1,\infty).\]
For $k$ fixed, there exists $O$ a relatively compact set of $\OM$, belonging in the sequence where the diagonal extraction was considered, where $\supp \p^k \subset O$. As there exists $n_O$ such that $O \cap \Cc_n = \emptyset, \ \forall n\geq n_O$, then \eqref{eq3D} reads
\begin{equation*}
\begin{split}
 0= \int_0^\infty \int_{\OM}\Bigl(& -u^n\cdot \p^k_t+ \nu \na u^n : \na \p^k - (u^n\otimes u^n ): \na \p^k\Bigl) \,dx\,dt'\\
 0=  \int_0^\infty \int_{\OM}\Bigl(& -u^n\cdot \p^k_t+ \nu \na u^n : \na \p^k - ( \mathbb{P}_{O} u^n\otimes  \mathbb{P}_{O} u^n ): \na \p^k - ( \mathbb{P}_{O} u^n\otimes  \nabla q^n ): \na \p^k \\
 & - (\nabla q^n\otimes  \mathbb{P}_{O} u^n ): \na \p^k - (\nabla q^n \otimes  \nabla q^n ): \na \p^k \Bigl)   \,dx\,dt'.
\end{split}
\end{equation*}
Actually, the last term is equal to zero: indeed we can check that $\diver (\nabla q^n \otimes  \nabla q^n )=\frac12 \nabla (|\nabla q^n |^2)$ and as it is a gradient, the last part vanishes by the divergence free condition of $\p^k$. Thanks to the strong convergence of $ \mathbb{P}_{O} u^n$ in $L^2_{\loc}(\R^+\times O)$ and the weak of $\na u^n$, $u^n$, $\nabla q^n$, we can pass to the limit $n\to \infty$ to get:
\[  \int_0^\infty \int_{\OM}( -u\cdot \p^k_t+ \nu \na u : \na \p^k - (u\otimes u ): \na \p^k) \,dx\,dt'=0.\]
In particular, this equality putting together with \eqref{u space} and \eqref{sobolev} gives that
\[ \pd_t u \text{ belongs to } L^{4/3}_{\loc}(\R^+,\Vc'(\OM)).\]
Then we can pass to the limit in
\begin{equation}\label{mom 1}
 \int_0^\infty \int_{\OM}( \pd_t u\cdot \p^k+ \nu \na u : \na \p^k - (u\otimes u ): \na \p^k) \,dx\,dt'=0
\end{equation}
as $k\to \infty$ to get
\[\forall \f \in C^\infty_c(0,\infty),\quad  \int_0^\infty \int_{\OM}( \pd_t u\cdot \p\f+ \nu \f \na u : \na \p - \f(u\otimes u ): \na \p) \,dx\,dt'=0.\]
This equality implies that
\begin{equation}\label{mom bis}
 \frac{d}{dt}  \int_{\OM}u\cdot \p = \int_{\OM} ( u\cdot \p_t - \nu \na u : \na \p + (u\otimes u ): \na \p) \,dx
 \end{equation}
in the sense of distribution in $\R^+$. Since the right hand side term belongs to $L^1_{\loc}(\R^+)$, the equality holds in $L^1_{\loc}(\R^+)$.

\medskip

Now, we check that $u$ belongs in the good functional space. Thanks to the previous equality, and \eqref{u space}, then we can easily prove that
\[ u \in C([0,\infty); \Vc'(\OM)) \cap C_w([0,\infty); \Hc(\OM)).\]
This argument can be found in \cite[Subsection III.3.1]{temam}: as $\pd_t u$ belongs to $L^1(\Vc')$, then its implies that $u$ is almost everywhere equal to a function continuous from $\R^+$ into $\Vc'$. Moreover, using the fact that $u\in L^\infty(\Hc)$, then \cite[Lem 1.4]{temam} states that the continuity in $\Vc'$ implies the weak continuity  in time with values in  $\Hc$.

Moreover, thanks to the continuity in $\Vc'$, we infer that the equality \eqref{mom bis} in the sense of $L^1_{\loc}(\R^+)$ implies that the integral equality \eqref{eq3D} holds for all $t>0$. Indeed, for the initial data we know from the uniform convergence in $H^{-2}_{\loc}(\OM)$ that $Eu_0^n\to u\vert_{t=0}$ in $H^{-2}_{\loc}(\OM)$. However, we proved in Proposition \ref{u_0_conv_3D} that $Eu_0^n \to u_0$ in $L^2(\OM)$, which allows us to state by the uniqueness of the limit in $H^{-2}_{\loc}$ that the initial velocity is $u_0=\Pb_{\OM}v_0$.

\medskip

To finish the proof of Theorem \ref{main 1}, we have to prove the energy inequality. We take the liminf of \eqref{est_vit}:
\[\liminf_{n\to 0} \|E u^n(t)\|^2_{L^2(\OM)} + 2\nu \liminf_{n\to 0} \int_0^t \|\na E u^n(t)\|^2_{L^2(\OM)} \leq \|u_0\|^2_{L^2(\OM)}\]
and we note that the weak limit\footnote{Indeed, by the uniform estimates and diagonal extraction, we can find a common subsequence such that $E u^n(t)$ weakly converge for all $t\in \mathbb{Q}^+$. Then, we conclude by the continuity that this sequence holds for all $t\in \R^+$.} in $L^2$ of $E u^n(t)$ to $u(t)$ and the weak limit in $L^2((0,t)\times \OM)$ of $\na E u^n$ to $\na u$ imply that
\[ \|u(t)\|^2_{L^2(\OM)} \leq \liminf_{n\to 0} \|E u^n(t)\|^2_{L^2(\OM)} \text{ and }\|\na u\|^2_{L^2((0,t)\times \OM)} \leq \liminf_{n\to 0} \|\na E u^n\|^2_{L^2((0,t)\times \OM)}.\]
It gives the last point required in Definition \ref{defNS}, which ends the proof of Theorem \ref{main 1}.

\bigskip

Therefore, we have shown that the Navier-Stokes solutions converge when the smooth obstacles convergence to a obstacle $\Cc$ verifying \eqref{patate}. We note here that we do not assume any assumption on the regularity of $\Cc$. In particular, this result holds if $\Cc$ is a surface.  As $u(t,\cdot) \in \Vc(\OM)$ for a.e. time, it is clear that the surface has a non-negligible effect on the motion of 3D viscous flow: $u$ verifies for almost every time the no slip boundary condition. In the following subsection, we discuss about the initial velocity properties in the particular case of the surface, and the goal is to get some  similarities  to the curve in 2D.

\subsection{Remark on the behavior of the initial velocity near a smooth surface}\label{sect surface}

In the two dimensional case, we obtain in \cite{lac_NS} an explicit formula of the initial velocity in terms of Riemann maps (identifying $\R^2$ and $\C$).  This formula allows us to state that $u_0$ is continuous up to the curve with different values on each side, except near the end-points where it behaves as the inverse of the square root of the distance. In our case, we do not have this formula, and we use in this subsection classical elliptic theory in order to get similar results in the case where $\Cc$ is a bounded orientable surface of codimension 1 in $\R^3$.

As mentioned in Remark \ref{rem curl}, $u_0-v_0$ is curl free in $\OM$ and as $\OM$ is simply connected we infer that there exists $p$ such that $u_0-v_0=\na p$. We know that $v_0$ is continuous up to the boundary, then the goal is to determine the behavior of $\na p$ near $\Cc$ where $p$ verifies the following elliptic problem
\begin{equation*}
\left\lbrace\begin{aligned}
&\D p=0 &\text{ in } \OM \\
&\frac{\pd p}{\pd n}=-v_0\cdot n &\text{ on } \Cc,
\end{aligned}\right.
\end{equation*}
where $v_0$ is regular on $\Cc$.

This subsection is independent of the convergence theory, and the goal here is  to give an example of behavior of $u_0$. Therefore, we add here some assumptions on $\Cc$ in order to apply classical elliptic results.
We assume that $\Cc$ is a $C^\infty$ manifold and its boundary $\Bc$ is a $C^\infty$ closed curve. For example, if $\Cc=\{ (x,y,0)\in \R^3,\ x^2+y^2 \leq 1\}$ we denote by $\Bc=\{ (x,y,0)\in \R^3,\ x^2+y^2 =1\}$.

The study of elliptic equations in the exterior of a surface with the Neumann condition is standard for the crack problem in 3D linear elasticity. Actually, to get exactly the Neumann boundary condition, we add a regular function $h$ such that $\tilde p:=p+h$ verifies
\[ \D \tilde p= f \text{ in }\OM, \quad \frac{\pd \tilde p}{\pd n}=0 \text{ on }\Cc.\]
For $\om_0$ regular enough, we rich the necessary regularity for $f$ in order  that $\tilde p$ has an expansion near $\Bc$ on the form
\[ \sum_{k\geq 0} r^{\frac12+k} \p(\th)\]
in local polar coordinates $(r,\th)$. Such a result is proved in \cite{dauge} (see also the references therein). In particular, it implies that $u_0$ is continuous up to $\Cc$, with possibly different values on each side, except near the boundary $\Bc$ where $u_0$ behaves like the inverse of the square root of the distance. Therefore, we obtain exactly the same behavior in 3D in the exterior of a surface than in 2D in the exterior of a curve.

Thanks to the continuity up to the surface, it is easy to see that the tangent condition implies that
\[ \diver u_0 = 0 \text{ and } \curl u_0=\om_0+g \d_\Cc\]
in $\Vc'(\R^3)$, where $g$ is the jump of the tangential component of $u_0$ through the surface (see e.g. \cite[Lem 5.8]{lac_euler} for this proof in dimension two).


\section{Viscous flow around a curve}\label{sect curve}

As in the previous section and as in \cite{ift_kell}, we consider $\{\Cc_n\}$ a family of smooth obstacles of $\R^3$ (in the sense of \eqref{patate})  which converges to $\Cc$ in the Hausdorff topology when $n\to \infty$. Here, $\Cc$ is assumed to be a compact injective $C^2$ curve  in $\R^3$ (i.e. included in a smooth manifold of dimension 1). The goal of this section is to prove that the curve is a removable singularity for the Naviers-Stokes solutions in $\R^3$.

\subsection{Convergence of the initial velocity}\label{sect initial curve}

We will need of a suitable cutoff function of a small neighborhood of $\Cc$. Let $\chi$ be a function verifying
\[ \chi \in C^\infty(\R), \quad \chi(s) \equiv 0 \text{ on } (-\infty,1), \quad \chi(s) \equiv 1 \text{ on } (2,+\infty), \]
then we define
\begin{equation}\label{cutoff}
 \y_n(x) := \chi \Bigl( n {\rm d}(x,\Cc) \Bigl).
\end{equation}

Although it is obvious that $\y_n$ vanishes in a small neighborhood of $\Cc$, we need the following estimates of $\| \na \y_n \|_{L^p}$.

\begin{proposition}\label{cutoff est}
Let $\Cc$ be a $C^2$ compact injective curve.
 There exist $n_0$ and $C>0$ such that for all $n> n_0$, 
\[\| \na \y_n \|_{L^\infty(\R^3)}\leq Cn \text{ and } {\rm meas}\Bigl(  \supp{(1- \y_n)}\Bigl) \leq C/n^2,\]
where {\rm meas} is the Lebesgue measure.
\end{proposition}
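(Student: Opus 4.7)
The plan is to treat the two estimates separately, as both reduce to well-known facts about the distance function to a compact $C^2$ set of codimension $2$.

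For the gradient bound, I would simply compute
\[
\na \y_n(x) = n\, \chi'\bigl(n\, {\rm d}(x,\Cc)\bigr)\, \na\bigl[{\rm d}(\cdot,\Cc)\bigr](x).
\]
Since $\Cc$ is compact, the distance function ${\rm d}(\cdot,\Cc)$ is $1$-Lipschitz on $\R^3$, so $|\na {\rm d}(\cdot,\Cc)|\le 1$ a.e.; combined with $\|\chi'\|_{L^\infty(\R)}<\infty$ (finite because $\chi'$ is smooth and compactly supported in $[1,2]$), this immediately gives $\|\na\y_n\|_{L^\infty}\le n\|\chi'\|_{L^\infty}=:Cn$.

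For the measure estimate, the key observation is that
\[
\supp(1-\y_n)\subset \{x\in\R^3 \;:\; n\, {\rm d}(x,\Cc)\le 2\}=\{x\in\R^3 \;:\; {\rm d}(x,\Cc)\le 2/n\},
\]
the tubular neighborhood of $\Cc$ of radius $2/n$. Since $\Cc$ is a $C^2$ compact injective curve, there exists $r_0>0$ (depending on the minimum radius of curvature and on the injectivity of $\Cc$) such that for every $r\le r_0$ the $r$-tubular neighborhood of $\Cc$ is diffeomorphic (via the normal exponential map) to a subset of the normal bundle $N\Cc$, and the change of variables produces a Jacobian bounded above by a constant depending only on $\Cc$. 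Parametrizing by arc-length $s\in[0,L]$ (with $L$ the length of $\Cc$) and using polar coordinates $(\rho,\th)$ in the normal plane at each point, one gets
\[
{\rm meas}\bigl(\{x\in\R^3 \;:\; {\rm d}(x,\Cc)\le r\}\bigr)\le C_\Cc \int_0^L\!\!\int_0^{2\pi}\!\!\int_0^r \rho\, d\rho\, d\th\, ds = \pi L\, C_\Cc\, r^2,
\]
for $r\le r_0$. Near the two endpoints of the curve, the tubular neighborhood is covered by two half-balls of radius $r$, contributing $O(r^3)$, which is absorbed in the $O(r^2)$ term. Choosing $n_0$ so that $2/n_0\le r_0$ and taking $r=2/n$ gives exactly ${\rm meas}(\supp(1-\y_n))\le C/n^2$ for $n\ge n_0$.

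The only slightly delicate point is the existence of the tubular neighborhood with bounded Jacobian; this is standard for a $C^2$ compact injective curve (it uses the $C^2$ regularity to control curvature and compactness plus injectivity to avoid self-intersections at small scales), so I would invoke it rather than reprove it.
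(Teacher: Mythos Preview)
Your proposal is correct and follows essentially the same route as the paper for the measure estimate: both you and the paper cover $\supp(1-\y_n)$ by the tubular neighbourhood $\{d(\cdot,\Cc)\le 2/n\}$, parametrize it via arc-length $s$ and polar coordinates $(\rho,\theta)$ in the normal plane, and bound the Jacobian by a multiple of $\rho$ to obtain the $O(1/n^2)$ volume (plus $O(1/n^3)$ half-balls at the endpoints). The paper simply writes out the Frenet-frame computation of the Jacobian explicitly rather than citing the tubular neighbourhood theorem, and it records as separate lemmas the unique-nearest-point property and the differentiability of $d(\cdot,\Cc)$ near $\Cc$, which you package as ``standard''.

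For the gradient bound your argument is in fact a slight shortcut: you use only that $d(\cdot,\Cc)$ is $1$-Lipschitz, hence $\y_n=\chi(n\,d(\cdot,\Cc))$ is $(n\|\chi'\|_{L^\infty})$-Lipschitz, which immediately gives $\|\na\y_n\|_{L^\infty}\le n\|\chi'\|_{L^\infty}$. The paper instead proves that $d(\cdot,\Cc)$ is genuinely differentiable away from $\Cc$ in a small neighbourhood (via the unique-nearest-point lemma) and computes $\na d(x,\Cc)=(x-y)/|x-y|$ explicitly. Your Lipschitz argument is enough for the proposition as stated; the paper's extra work pays off later, where it is used that $\y_n$ is actually $C^1$ (not merely $W^{1,\infty}$) when approximating test functions in $H^1$.
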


Even if such a proposition seems standard, we write the details in Appendix \ref{appendix} for  sake of completeness.
Thanks to this proposition, we can prove that there is no function supported on $\Cc$ which belongs to $H^{-1}(\R^3)$.

\begin{lemma}
Let $f$ a function belonging in $H^{-1}(\R^3)$. If
\[ \langle f, \f \rangle_{H^{-1},H^1} = 0, \ \forall \f \in C^\infty_c(\R^3 \setminus \Cc)\]
then $f=0$ in $\Dc'(\R^3)$.
\end{lemma}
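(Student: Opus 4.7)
The plan is to use the cutoff $\y_n$ of Proposition \ref{cutoff est} to probe $f$ against an arbitrary $\f \in C^\infty_c(\R^3)$ by writing $\f = \y_n\f + (1-\y_n)\f$. Since $\y_n$ vanishes on a $1/n$-neighborhood of $\Cc$, the product $\y_n\f$ belongs to $C^\infty_c(\R^3\setminus\Cc)$, so the hypothesis forces $\langle f,\y_n\f\rangle=0$. Everything reduces to showing $\langle f,(1-\y_n)\f\rangle\to 0$; then $\langle f,\f\rangle=0$ for every $\f\in C^\infty_c(\R^3)$ and $f=0$ in $\Dc'(\R^3)$.

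Since $f\in H^{-1}(\R^3)=(H^1(\R^3))'$, it is enough to prove that $(1-\y_n)\f\rightharpoonup 0$ weakly in $H^1(\R^3)$. The naive hope for \emph{strong} $H^1$ convergence fails: Proposition \ref{cutoff est} only gives $\|\na\y_n\|_{L^2}\le \|\na\y_n\|_{L^\infty}\,\mathrm{meas}(\supp\na\y_n)^{1/2}\le Cn\cdot(C/n^2)^{1/2}=C$, uniformly bounded but not tending to zero. This is precisely the critical scaling for a codimension-$2$ set in $\R^3$, and is what forces the passage from strong to weak convergence.

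The strong $L^2$ limits $\|(1-\y_n)\f\|_{L^2}\to 0$ and $\|(1-\y_n)\na\f\|_{L^2}\to 0$ follow at once from the measure bound $\mathrm{meas}(\supp(1-\y_n))\le C/n^2$ together with the boundedness of $\f$ and $\na\f$. For the remaining term $\f\,\na\y_n$, which is only uniformly bounded in $L^2$, I would establish weak convergence to zero by integration by parts: for any $\P\in C^\infty_c(\R^3;\R^3)$,
\[ \int_{\R^3}\f\,\na\y_n\cdot\P\,dx=\int_{\R^3}\na\y_n\cdot(\f\P)\,dx=-\int_{\R^3}\y_n\,\na\cdot(\f\P)\,dx\longrightarrow -\int_{\R^3}\na\cdot(\f\P)\,dx=0, \]
by dominated convergence (using $\y_n\to 1$ a.e., since $\Cc$ has Lebesgue measure zero, and $|\y_n|\le 1$) and the fact that $\f\P$ is compactly supported. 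Combined with the uniform $L^2$-bound on $\f\,\na\y_n$ and the density of $C^\infty_c(\R^3;\R^3)$ in $L^2$, this upgrades to full weak $L^2$-convergence to zero.

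Putting the pieces together, $\na[(1-\y_n)\f]=-\f\,\na\y_n+(1-\y_n)\na\f\rightharpoonup 0$ in $L^2(\R^3)$, hence $(1-\y_n)\f\rightharpoonup 0$ in $H^1(\R^3)$, and the duality pairing with $f\in H^{-1}(\R^3)$ tends to $0$. The main conceptual obstacle is recognizing that strong $H^1$ convergence is unavailable and that only weak convergence can be salvaged from the estimates of Proposition \ref{cutoff est}; this is the analytic manifestation of the fact that $\Cc$ has zero Sobolev $H^1$-capacity in $\R^3$.
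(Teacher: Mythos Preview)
Your proof is correct and follows essentially the same approach as the paper: cut off with $\y_n$, use the hypothesis on $\y_n\f$, and show $(1-\y_n)\f\rightharpoonup 0$ weakly in $H^1(\R^3)$. Two minor remarks: since the distance function is only $C^1$ near $\Cc$ (see Appendix~\ref{appendix}), the product $\y_n\f$ lies in $C^1_c(\R^3\setminus\Cc)$ rather than $C^\infty_c$, so a one-line density argument is needed to invoke the hypothesis (the paper makes this explicit); and where you establish weak $H^1$ convergence by an explicit integration-by-parts against test vectors, the paper gets it more cheaply by combining the uniform $H^1$ bound with the strong $L^2$ convergence and Banach--Alaoglu, which identifies any weak subsequential limit as zero.
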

\begin{proof} 
We fix $\f \in C^{\infty}_c(\R^3)$, and we introduce 
\[ \f_n(x):= \y_n(x) \f(x)\]
where $\y_n$ is the cutoff function defined in \eqref{cutoff}. As $\y_n$ is $C^1$ (see Appendix \ref{appendix}) and supported in the exterior of a small neighborhood of $\Cc$, then by assumption (and density of $C^\infty_c(\R^3 \setminus \Cc)$ in $C^1_c(\R^3 \setminus \Cc)$  for $H^1(\R^3)$ norm), we have that
\[ \langle f, \f_n \rangle_{H^{-1},H^1} = 0, \ \forall  n.\]
Thanks to Proposition \ref{cutoff est}, we get easily that
\[ \|\f_n-\f \|_{L^2}\leq C(\f) / n \to 0 \text{ and } \|\na \f_n-\na \f \|_{L^2}\leq C(\f).\]
By Banach-Alaoglu theorem, we can extract a subsequence such that $\f_n - \f \rightharpoonup 0$ weakly in $H^1(\R^3)$. 
In particular, it implies that
\[ \langle f,\f \rangle = \langle f,\f_n \rangle+\langle f,\f-\f_n \rangle= \langle f,\f-\f_n \rangle \to 0\]
as $n\to \infty$. Then, we have proved that $ \langle f,\f \rangle=0$, for all $\f \in C^{\infty}_c(\R^3)$.
\end{proof}

Now, we can come back to our main problem. Let $\Cc$ be an injective compact smooth curve in $\R^3$. We consider $\{\Cc_n\}$ a family of smooth obstacles of $\R^3$ such that $\Cc\subset \Cc_n$ and $\Cc_n$ converges to $\Cc$ in the Hausdorff topology when $n\to \infty$. Let $\om_0\in V(\R^3)$ be an initial vorticity. As in the previous section we set 
\[ \om_0^n:= \om_0\vert_{\OM_n} \text{ with } \OM_n= \R^3\setminus \Cc_n\]
then we define by $u_0^n:=\Pb_{\OM_n} v_0$ the unique vector field in $\OM_n$ solving \eqref{u0n}. We prove in the following proposition that we do not feel the presence of the curve for the limit initial velocity.

\begin{proposition}\label{u_0_conv_5.3}
With the above notation, we have 
\[E{u_0^n}\to v_0 \text{ strongly in } L^2(\R^3),\]
where $v_0$ is the velocity field without obstacle \eqref{v_0_3D}.
\end{proposition}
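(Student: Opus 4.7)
My plan is to upgrade Proposition \ref{u_0_conv_3D} by means of the $H^{-1}$-removability lemma just established. Proposition \ref{u_0_conv_3D} already gives $Eu_0^n\to u_0:=\Pb_\OM v_0$ strongly in $L^2(\OM)$; since the compact $C^2$ curve $\Cc$ has zero Lebesgue measure, the $L^2(\OM)$- and $L^2(\R^3)$-norms agree on fields extended by zero across $\Cc$. The statement then reduces to proving $Eu_0=v_0$ a.e.\ on $\R^3$.

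To identify $Eu_0$ with $v_0$, I would show that $Eu_0\in L^2(\R^3)$ satisfies the same div--curl system on $\R^3$ as $v_0$ does. Inside $\OM$, the orthogonal splitting $L^2(\OM)=G(\OM)\oplus\Hc(\OM)$ applied to $v_0\vert_\OM$ yields $v_0\vert_\OM = \nabla p + u_0$ for some $p\in H^1_{\loc}(\OM)$, so $\curl u_0 = \curl v_0 = \om_0$ in $\Dc'(\OM)$, while $\diver u_0 = 0$ in $\Dc'(\OM)$ is immediate from $u_0\in\Hc(\OM)$. Testing the distributions $\diver(Eu_0)$ and $\curl(Eu_0)-\om_0$, now viewed on all of $\R^3$, against any $\f\in C^\infty_c(\R^3\setminus\Cc)=C^\infty_c(\OM)$, the integrals collapse to the above $\Dc'(\OM)$ pairings and vanish; in other words, these two distributions vanish on $C^\infty_c(\R^3\setminus\Cc)$.

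Since $Eu_0,\om_0\in L^2(\R^3)$, every scalar component of $\diver(Eu_0)$ and of $\curl(Eu_0)-\om_0$ lies in $H^{-1}(\R^3)$. Applying the preceding lemma componentwise then forces $\diver(Eu_0)=0$ and $\curl(Eu_0)=\om_0$ in $\Dc'(\R^3)$. The difference $w:=Eu_0-v_0\in L^2(\R^3)$ is therefore divergence- and curl-free on all of $\R^3$; a standard Fourier argument ($\xi\cdot\hat w\equiv 0$ and $\xi\times\hat w\equiv 0$, hence $\hat w\equiv 0$) gives $w=0$, whence $Eu_0=v_0$.

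The substantive ingredient is the preceding lemma: it is precisely the statement that the curve $\Cc$ cannot support any nonzero $H^{-1}(\R^3)$ distribution, and without it one could only promote $\diver$ and $\curl$ identities from $\OM$ to $\R^3$ up to distributions supported on $\Cc$ (exactly the defect flagged in Remark~\ref{rem curl} for the surface case). Once this removability is available, the rest of the argument (extension by zero and div--curl uniqueness on $\R^3$) is routine bookkeeping.
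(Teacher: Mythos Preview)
Your argument is correct and is in fact cleaner than the paper's own proof. Both rely on the same key ingredient---the $H^{-1}$-removability lemma for the curve---but they organize the work differently. The paper works directly with the sequence: it introduces a bounded stream function $\psi$ for $v_0$, builds explicit competitors $w^n=\curl(\eta_n\psi)\in\Hc(\OM_n)$ via the cutoff $\eta_n$, and uses the projection inequality $\|u_0^n-v_0\|_{L^2(\OM_n)}\le\|w^n-v_0\|_{L^2(\OM_n)}$ to obtain a uniform bound on $\|Eu_0^n-v_0\|_{L^2}$ (a bound that, as you implicitly observe, was already available from $\|u_0^n\|\le\|v_0\|$). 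It then extracts a weak limit of $Eu_0^n-v_0$, applies the removability lemma to identify it as zero, and upgrades to strong convergence by norm matching as in Proposition~\ref{u_0_conv_3D}. Your route is more modular: you invoke Proposition~\ref{u_0_conv_3D} once to get strong convergence $Eu_0^n\to u_0=\Pb_\OM v_0$, and then apply the removability lemma to the \emph{single} function $Eu_0$ to show it satisfies the full-space div--curl system, whence $Eu_0=v_0$. This avoids the stream-function construction entirely and makes transparent that the only new content beyond Proposition~\ref{u_0_conv_3D} is the identification $\Pb_\OM v_0=v_0$ when $\Cc$ has zero $H^1$-capacity.
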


\begin{proof} As $v_0$ is continuous and behaves like $\mathcal{O}(1/|x|^2)$ at infinity (see \eqref{v_0_3D} for the explicit formula), we obtain directly that $v_0$ belongs to $L^p(\R^3)$ for any $p\in (3/2,\infty]$.

First, we introduce the stream function corresponding to $v_0$:
\[ \p(x)=-\int_{\R^3}\frac{x-y}{4\pi|x-y|^3}\times v_0(y) \, dy ,\]
which verifies $\curl \p = v_0$, $\diver \p =0$ and $\| \p\|_{L^\infty}\leq C \|v_0\|_{L^2\cap L^4}$ (see e.g. \cite{ift_kell}), so $\p$ is bounded.

Next, we define
\[ w^n : = \curl (\y_n \p) = \y_n v_0 + \na \y_n \times \p,\]
where $\y_n$ is the cutoff function \eqref{cutoff}. By construction, we deduce that $w^n$ belongs to $H_2(\OM_n)=\Hc(\OM_n)$ (see Appendix \ref{app coro}).

Finally, we use that $u_0^n$ is the $L^2$ projection of $v_0 \vert_{\OM_n}$ on $\Hc(\OM_n)$, to compute
\begin{eqnarray*}
\|E u_0^n -v_0\|_{L^2(\R^3)}&\leq& \| u_0^n -v_0\|_{L^2(\OM_n)}+\|v_0\|_{L^2(\Cc_n)} \\
&\leq& \| w^n -v_0\|_{L^2(\OM_n)}+\|v_0\|_{L^2(\Cc_n)}\\
&\leq& \|\na \y_n \times \p \|_{L^2(\OM_n)} + \|(1-\y_n) v_0\|_{L^2(\OM_n)} + \|v_0\|_{L^2(\Cc_n)} \\
&\leq& \|\na \y_n\|_{L^2} \|\p \|_{L^\infty}+ \|v_0\|_{L^\infty} (1/n+ {\rm meas}(\Cc_n))\\
&\leq& C,
\end{eqnarray*}
where we have used Proposition \ref{cutoff est}. This inequality implies that there exist $v\in L^2(\R^3)$ and a subsequence such that
\[E u_0^n - v_0 \rightharpoonup v \text{ weakly in }L^2.\]
By passing to the limit, we obtain that $\diver v= \curl v =0$ for all test function in $C^{\infty}_c(\R^3\setminus \Cc)$. Moreover, $\diver v$ and $\curl v$ belongs to $H^{-1}(\R^3)$ and we apply the previous lemma to state that $\diver v= \curl v \equiv 0$ on $\R^3$, and then $v=0$.

Moreover, reasoning as in the proof of Proposition \ref{u_0_conv_3D}, we pass from the weak convergence to the strong convergence of $u_0^n$ to $v_0$, which ends this proof.
\end{proof}

\subsection{Proof of Theorem \ref{main 2}}

The begin of the proof follows the same idea of the proof of Theorem \ref{main 1}: we prove that we have a strong limit of $Eu^n$ to $u$ a solution of the Navier-Stokes equations in ``the exterior of the curve''. The second step is to show that $u$ is actually a solution in $\R^3$.

\medskip

For all $n$, we consider $u^n$ a global weak solution, in the sense of Definition \ref{defNS}, of  \eqref{NS1}-\eqref{NS2} in $\OM_n$ with initial datum $u_0^n$, which is defined in the previous subsection.

By Proposition \ref{u_0_conv_5.3}, we already know that $Eu_0^n \to v_0$ in $L^2(\R^3)$. Next, we exactly follow Subsection \ref{time evolution}:
\begin{itemize}
\item uniform estimates of $Eu^n$ in $L^\infty((0,\infty);L^2(\R^3))$ and of $\na Eu^n$ in $L^2((0,\infty)\times \R^3)$;
\item equicontinuity in $\Vc^{-1}(O)$, with $O$ relatively compact in $\R^3 \setminus \Cc$;
\item precompactness of $\{\mathbb{P}_O u^n\}$ in $L^2((0,T) \times O )$;
\item by a priori estimates we know that
\begin{equation*}
u \in L^\infty((0,\infty);L^2(\R^3)) \text{ and } \na u \in L^2((0,\infty)\times \R^3);
\end{equation*}
\item passing at the limit $n\to \infty$ in the weak formulation of the momentum equation: $\forall \p^k\in C^\infty_0((0,\infty)\times(\R^3\setminus \Cc))\, \vert\, \diver \p^k=0$, we have \eqref{mom 1}.
\end{itemize}

\medskip

Now, we need to prove that the momentum equation is verified for all $\p \in C^1(\R^+;\Vc(\R^3))$. Then, to finish the proof of Theorem \ref{main 2}, we have to establish \eqref{eq3D} for test functions whose the support meets the curve. The following lemma will be the key of this extension.

\begin{lemma}
For all $\f\in C^\infty_c((0,\infty)\times \R^3)$ such that $\diver \f = 0$, then there exists a sequence $\p_n \in C^\infty_c((0,\infty)\times (\R^3\setminus\Cc))$ such that
\[ \diver \p_n =0 \text{ and } \p_n\rightharpoonup \f \: \text{ weak-$*$ in } L^\infty(\R^+; H^1(\R^3)).\]
\end{lemma}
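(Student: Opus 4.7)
The plan is to define $\p_n := \y_n \f - w_n$, where $\y_n$ is the cutoff \eqref{cutoff} and $w_n$ is a divergence-correcting term built from Bogovskii's inverse-divergence operator. Since $\diver \f = 0$, the divergence defect is
\[ g_n := \diver(\y_n \f) = \na \y_n \cdot \f,\]
which is smooth, supported in the thin tubular shell $A_n := \{x: 1/n \le d(x,\Cc) \le 2/n\}$, and has zero mean on $\R^3$ (integrate by parts, using the compact support of $\f$). Proposition \ref{cutoff est} gives
\[ \|g_n\|_{L^2(\R^3)} \le \|\na \y_n\|_{L^\infty}\|\f\|_{L^\infty}|A_n|^{1/2} \le C\]
uniformly in $n$.

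Enclose $A_n$ in a slightly fatter Lipschitz tubular domain $D_n := \{x : 1/(2n) < d(x,\Cc) < 3/n\}$, which is disjoint from $\Cc$ for $n$ large. I would apply Bogovskii's construction on $D_n$ to obtain $w_n \in H^1_0(D_n)$, extended by zero to $\R^3$, with $\diver w_n = g_n$. The \emph{main obstacle} is the estimate
\[ \|\na w_n\|_{L^2(\R^3)} \le C\|g_n\|_{L^2}\]
with a constant \emph{independent of $n$}: the classical Bogovskii constant on a thin tube of transverse width $\sim 1/n$ can degenerate with $n$. The fix I have in mind is to cover $D_n$ by finitely-overlapping balls of radius $\sim 1/n$ (a cover of uniformly controlled multiplicity exists because $\Cc$ is $C^2$ and injective), to apply the scale-invariant Bogovskii estimate on each ball, and to glue via the standard chain argument of Galdi; the local mean-zero corrections remain cheap because, in the normal coordinates of $\Cc$, $\na \y_n = n\chi'(n|y|)\hat r$ is purely radial, so $g_n$ has essentially vanishing angular mean on each normal disk ($\int_0^{2\pi} \hat r\, d\th = 0$).

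Since $w_n$ is supported in $D_n$, which has transverse diameter $\sim 1/n$, and vanishes on $\pd D_n$, the Poincar\'e inequality yields $\|w_n\|_{L^2(\R^3)} \le \frac{C}{n}\|\na w_n\|_{L^2} \to 0$, so $w_n \rightharpoonup 0$ weakly in $H^1(\R^3)$. Similarly, Proposition \ref{cutoff est} gives that $(\y_n - 1)\f$ is bounded in $H^1$ and converges to $0$ strongly in $L^2$, hence $\rightharpoonup 0$ weakly in $H^1$. Summing,
\[ \p_n - \f = (\y_n - 1)\f - w_n \rightharpoonup 0 \quad \text{weakly in } H^1(\R^3),\]
pointwise in $t$, with a uniform bound. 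Because $\f$ has compact support in time, dominated convergence upgrades the pointwise-in-$t$ weak convergence (tested against $L^1(\R^+;H^{-1})$ functions) to the required weak-$*$ convergence in $L^\infty(\R^+; H^1(\R^3))$.

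Finally, the construction only yields $\p_n \in H^1_0$, divergence-free and supported inside $\{d(x,\Cc) \ge 1/(2n)\}\times[t_1,t_2]$; a mollification at a spatial scale $< 1/(4n)$ (and any small temporal scale) commutes with $\diver$ and produces the desired element of $C^\infty_c((0,\infty) \times (\R^3 \setminus \Cc))$, a diagonal extraction in the mollification parameter preserving the weak convergence.
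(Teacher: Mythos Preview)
Your overall strategy---cutoff $\y_n\f$, correct the divergence via Bogovskii, then mollify---matches the paper's. The divergence is at the Bogovskii step, where you create a difficulty the paper avoids. You solve $\diver w_n = g_n$ on the \emph{shrinking} shell $D_n$ and must then show the Bogovskii constant is uniform in $n$; as you correctly flag, the constant on a tube of length $\sim 1$ and thickness $\sim 1/n$ can degenerate (after rescaling by $n$ this is a domain of aspect ratio $\sim n$, and in Galdi's chain construction the constant grows with the number $\sim n$ of links). Your proposed fix---covering by $O(n)$ balls and invoking the vanishing angular mean of $\hat r$---is not complete: the chain argument transports mean corrections along the cover, and you would have to prove that the \emph{accumulated} transported mass stays bounded, not merely that each local mean is small. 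That is genuine additional work, not settled by the sketch.

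The paper sidesteps all of this by applying Bogovskii on the \emph{fixed} domain $\widetilde\OM := B \setminus \Cc$, where $B$ is a ball containing $\supp\f(t,\cdot)$ for all $t$. Since $\Cc$ is a $C^2$ curve, $\widetilde\OM$ satisfies the cone condition, so Galdi's theorem gives a single constant $c_p$, independent of $n$, with $\|h_n\|_{W^{1,p}_0(\widetilde\OM)} \le c_p \|g_n\|_{L^p}$ for every $p\in(1,\infty)$. From $\|g_n\|_{L^p} \le C n^{1-2/p}$ one obtains $h_n\to 0$ strongly in $W^{1,p}$ for all $p<2$ and boundedness in $H^1$; the weak-$*$ limit in $L^\infty(\R^+;H^1)$ is then identified as $0$ by uniqueness of the limit in $L^\infty(\R^+;W^{1,p})$. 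No scale-dependent geometric estimate is needed at all---the simplification you are looking for is to enlarge, not shrink, the domain on which you correct the divergence.
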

\begin{proof} All the difficulty comes from the condition $\diver \p_n =0$. Indeed, without this condition, it is sufficient to multiply by the cutoff function. In \cite{ift_lop_NS,ift_kell}, the standard way to construct divergence free functions compactly supported outside the obstacle is to multiply the stream function by the cutoff function. However, we see in the proof of Proposition \ref{u_0_conv_5.3} that the computation of the $H^1$ norm makes appear $\| \na^2 \y_n \|_{L^2}$ which blows up strongly in our case.

So, we present here a new way to approximate divergence free function, which is not explicit as in the standard way. The following method comes from \cite[Chap III.4]{galdi} and is based on the Bogovskii operator. We set
\begin{equation}\label{fn}
 f_n(t,x):=\y_n(x) \f(t,x)
\end{equation}
which belongs to $C^\infty_c ((0,\infty);C^1_c(\R^3\setminus \Cc))$ but which is not divergence free. Its divergence
\[ g_n(t,x):= \diver f_n(t,x)= \f(t,x)\cdot \na \y_n(x)\]
verifies the following estimates for all $t$:
\[ \int_{\R^3} g_n(t,\cdot)=0 \text{ and } \|g_n(t, \cdot)\|_{L^p(\R^3)} \leq C n^{1-\frac2p},\]
where $C$ depends only on $\f$. To correct this divergence, we use the result of Bogovskii. Let $B$ be a ball big enough containing the support of $\f$. As the set $\widetilde \OM:= B\setminus \Cc$ verifies the cone condition (see \cite[Rem III.3.4]{galdi} for a precise definition), and as $g_n\in L^p(\widetilde\OM)$ with $\int_{\widetilde\OM} g_n=0$, then Theorem 3.1 in  \cite[Chap III.3]{galdi} states that there exists at least one solution $h_n$ of the following problem:
\[ \diver h_n = -g_n,\quad h_n\in W^{1,p}_0(\widetilde\OM), \quad \|h_n \|_{W^{1,p}(\widetilde\OM)} \leq c_p \| g_n \|_{L^p(\widetilde\OM)}.\]
Moreover, the constant $c_p$ depends only on $\widetilde\OM$ and $p$, and as $g_n$ as a compact support in $\widetilde\OM$, so is $h_n$. Extending by zero outside $\widetilde\OM$, we can define $h_n(t,\cdot)\in W^{1,p}(\R^3)$  for all $t$, and we 
have that $h_n\to 0$ strongly in $L^\infty(\R^+;W^{1,p}(\R^3))$ for all $p<2$ and uniformly bounded in $L^\infty(\R^+;H^1(\R^3))$. So we can extract a subsequence such that $h_n$ converges weak-$*$ in $L^\infty(\R^+;H^1(\R^3))$ and by uniqueness of the limit in $L^\infty(\R^+;W^{1,p}(B))$ it converges to $0$.

On the other hand, we know from the definition \eqref{fn} that $f_n-\f$ converges to $0$ strongly in $L^\infty(\R^+;W^{1,p}(\R^3))$ for any $p<2$ and weak-$*$ in $L^\infty(\R^+;H^1(\R^3))$.

In conclusion, $\p_n:= f_n + h_n\in L^\infty(\R^+;H^1(\R^3))$ is divergence free, compactly supported in $B\setminus \Cc$ and converges to $\f$ weak-$*$ in $L^\infty(\R^+;H^1(\R^3))$. Smoothing $\p_n$ by some mollifiers, we obtain the result.
\end{proof}

In the previous proof, we see that the approximation constructed verifies also
\[ \pd_t \p_n\rightharpoonup \pd_t \f \: \text{ weak-$*$ in } L^\infty(\R^+; H^1(\R^3)).\]
Therefore, we apply the previous lemma in order to pass to the limit in \eqref{mom 1}, which implies that this equality is verified for all test function in $C^\infty_c((0,\infty)\times \R^3)$. Now, we finish as we did in Subsection \ref{time evolution}:
\begin{itemize}
\item \eqref{mom 1} gives that $\pd_t u \in L^{4/3}_{\loc}(\R^+;\Vc'(\R^3))$;
\item we pass at the limit to say that the momentum equation is verified for all $\p\in C^1(\R^+;\Vc(\R^3))$ in $L^1_{\loc}(\R^+)$;
\item we get the continuity in time with values in $\Vc'(\R^3)$;
\item we conclude to the validity of \eqref{eq3D} for all $t$;
\item identification of the initial velocity: $u(0,\cdot)=v_0$ by Proposition \ref{u_0_conv_5.3};
\item thanks to weak convergence, we prove the energy inequality \eqref{est_vit} on $\R^3$.
\end{itemize}

Its ends to prove that $u$ is a global weak solution of the Navier-Stokes solutions in $\R^3$, in the sense of Definition \ref{defNS}.

\appendix

\section{Proof of Lemma \ref{coro}}\label{app coro}

First, we note from \cite[Theo III.2.2]{galdi} that the assumption on $\OM_n$ (smoothness and compact boundary) implies the coincidence of $\Hc(\OM_n)$ and $H_2(\OM_n)$.

By \eqref{leray projection}, we know that $v_0 \vert_{\OM} - \Pb_{\OM_n} v_0$ is a gradient and then $\curl \Pb_{\OM_n} v_0 = \curl v_0|_{\OM_n} =  \om_0 \vert_{\OM_n}$. Therefore $\Pb_{\OM_n} v_0$ is a solution of \eqref{u0n}, which proves the existence part of Lemma \ref{coro}.

Concerning the uniqueness, let us assume that $u$ and $w$ are two solutions. As $\OM_n=\R^3\setminus \Cc_n$ is simply connected in $\R^3$,  $\curl (u-w)=0$ on $\OM_n$ implies that there exists $p$ such that $u-w=\na p$. Then $\na p$ belongs to $\Hc_{\OM_n}$ which is possible only for $\na p=0$ (see \eqref{leray projection}). Its ends the proof of Lemma \ref{coro}.

\section{Cutoff functions}\label{appendix}

In order to differentiate the function $x\mapsto {\rm d}(x,\Cc)$, we have to check that the minimal distance ${\rm d}(x,y)$ is reached for a unique $y\in \Cc$, at least for $x$ closed enough to $\Cc$. We need to assume that the curve is at least $C^1$, because such a property is false near a corner. For example, if $\Cc$ is the curve $\{(x,y,0) \vert x\in [-1,1], y=|x|\}$ then for all $M$ on the half line $\{ (0,y,0) \vert y>0\}$, we have two minimums on $\Cc$: $M_1:=(-y/\sqrt2,y/\sqrt2,0)$ and $M_2:=(y/\sqrt2,y/\sqrt2,0)$ and ${\rm d}(M,\Cc)={\rm d}(M,M_1)={\rm d}(M,M_2)=y/\sqrt2$.

This example shows that we have to assume some regularity for the curve.

\begin{lemma}\label{unique-min}
Let $\Cc$ be a $C^2$ injective curve, then there exists $\tilde n_0$ such that
\[ \forall x \: \vert \: {\rm d}(x,\Cc) <1/\tilde n_0,\: \exists ! y \in \Cc \: \vert \: {\rm d}(x,y)={\rm d}(x,\Cc).\]
\end{lemma}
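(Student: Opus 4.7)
The plan is to argue by contradiction using Rolle's theorem on the squared-distance function, exploiting only the $C^2$ regularity of $\Cc$. Parametrize $\Cc$ by arclength as $\gamma:[0,L]\to\R^3$ of class $C^2$ and injective (if $\Cc$ is a closed curve, work on $\R/L\Z$; the argument is identical). Set $M:=\max_{s\in[0,L]}|\gamma''(s)|$, which is finite by compactness. Suppose the conclusion fails: there is a sequence $(x_n)\subset\R^3$ with $r_n:={\rm d}(x_n,\Cc)\to 0$ admitting two distinct minimizers $y_n^i=\gamma(s_n^i)$, $i=1,2$, with $s_n^1\ne s_n^2$. Extracting a subsequence, $s_n^i\to s^{(i)}\in[0,L]$ and $x_n\to y^*\in\Cc$; from $|x_n-\gamma(s_n^i)|=r_n\to 0$ one gets $\gamma(s^{(1)})=\gamma(s^{(2)})=y^*$, and injectivity forces $s^{(1)}=s^{(2)}=:s^*$. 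When $\Cc$ is an arc, the mixed endpoint case $\{s_n^1,s_n^2\}=\{0,L\}$ is excluded for large $n$ because $r_n<\tfrac12|\gamma(0)-\gamma(L)|$.

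Next, set $g_n(s):=|x_n-\gamma(s)|^2$, a $C^2$ function on $[0,L]$ with $g_n(s_n^1)=g_n(s_n^2)=r_n^2$. By Rolle's theorem applied to $g_n$ there exists $s_n^3$ strictly between $s_n^1$ and $s_n^2$ with $g_n'(s_n^3)=0$. If both $s_n^1,s_n^2$ are interior minimizers then also $g_n'(s_n^1)=g_n'(s_n^2)=0$, and a second Rolle applied to $g_n'$ between $s_n^1$ and $s_n^2$ produces $s_n^4$ with $g_n''(s_n^4)=0$. If instead one minimizer is an endpoint, say $s_n^1=0$, then $g_n'(s_n^2)=0$ still holds at the interior point; combined with $g_n'(s_n^3)=0$, a second Rolle between $s_n^3$ and $s_n^2$ still yields $s_n^4$ with $g_n''(s_n^4)=0$. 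Computing $g_n''(s)=2|\gamma'(s)|^2-2(x_n-\gamma(s))\cdot\gamma''(s)=2-2(x_n-\gamma(s))\cdot\gamma''(s)$ (using $|\gamma'|\equiv 1$), this yields
\begin{equation*}
(x_n-\gamma(s_n^4))\cdot\gamma''(s_n^4)=1,\qquad\text{whence}\qquad |x_n-\gamma(s_n^4)|\ge 1/M.
\end{equation*}

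Since $s_n^4$ lies between $s_n^1$ and $s_n^2$, we have $s_n^4\to s^*$ and hence $\gamma(s_n^4)\to y^*$; together with $x_n\to y^*$ this forces $|x_n-\gamma(s_n^4)|\to 0$, contradicting the lower bound $1/M$. Consequently, any $\tilde n_0$ exceeding a suitable constant depending on $M$ (and, in the arc case, on the endpoint separation) works.

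The only real subtlety is the bookkeeping when one of the minimizers lies at a boundary point $\{0,L\}$ of the parameter interval, where Fermat's condition is only an inequality rather than an equality. The double-Rolle trick (first on $g_n$, then on $g_n'$) is precisely what bypasses this issue and extracts the second-order information without any case analysis on the sign of $g_n'$ at the endpoint; everything else is a routine consequence of the $C^2$ regularity and compactness of $\Cc$.
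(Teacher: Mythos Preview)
Your proof is correct and follows essentially the same route as the paper: argue by contradiction, extract a subsequence so that the two minimizing parameters coalesce, and use the second derivative of the squared-distance function $g_n(s)=|x_n-\gamma(s)|^2$ (which equals $2-2(x_n-\gamma(s))\cdot\gamma''(s)$ and hence stays close to $2$ near the curve) to rule out two distinct minima. The only notable difference is packaging: the paper bounds $f''\ge 1$ uniformly on $[s_{1,N},s_{2,N}]$ and invokes strict convexity directly, whereas you extract a single zero of $g_n''$ via a double Rolle argument---and in doing so you handle the boundary-parameter case explicitly, a point the paper leaves implicit.
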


\begin{proof}
Let $s\mapsto \Cc(s)=M$ be the arclength parametrization. Then, we recall the standard definition:
\begin{itemize}
\item the tangent vector and the curvature
\[ \overrightarrow{\t}(s)=\df{M}{s} = \Cc'(s), \quad \r(s)= \Bigl\| \df{\overrightarrow{\t}(s)}{s}\Bigl\|=\| \Cc''(s) \| ;\]
\item the main normal vector and the binormal vector (if $\r(s)\neq 0$)
\[ \overrightarrow{\n}(s)=\frac1{\r(s)} \df{\overrightarrow{\t}(s)}{s}= \frac{\Cc''(s)}{\| \Cc''(s) \|} ,\quad \overrightarrow{\b}(s) = \overrightarrow{\t}(s)\times \overrightarrow{\n}(s).\]
\end{itemize}
As $\Cc$ is assumed to be $C^2$, there exists $\r_M>0$ such that $|\r(s) |\leq \r_M$ on $\Cc$.

The existence of $y\in \Cc$ such that ${\rm d}(x,y)={\rm d}(x,\Cc)$ is obvious because the map $z\mapsto {\rm d}(x,z)$ is continuous on the compact $\Cc$. Let us assume that the conclusion of the lemma is false, i.e. that the infimum is reached twice:
\[\forall n\in \N, \:  \exists x_n \: \vert \: {\rm d}(x_n,\Cc) <1/n,\: \exists y_n=\Cc(s_{1,n}), z_n=\Cc(s_{2,n}) \]
such that
\[ s_{1,n}<s_{2,n} \text{ and } {\rm d}(x_n,y_n)={\rm d}(x_n,\Cc)= {\rm d}(x_n,z_n).\]
Extracting a subsequence, we have by compactness that $x_n, y_n,z_n \to x \in \Cc$. As the curve is assumed to be injective (so without cross-point), we infer that $s_{1,n}-s_{2,n}\to 0$. So, by continuity of $\Cc$ in $x$, we note easily that there exists $N$ such that
\begin{equation}\label{near}
{\rm d}(x_N,\Cc(s)) \leq 1/(2\r_M),\: \forall s \in [s_{1,N}, s_{2,N}]. 
\end{equation}

Next, we introduce the following function
\[f: \: s\mapsto {\rm d}(x_N, \Cc(s))^2= \| \overrightarrow{x_N M(s)} \|^2,\]
and we differentiate twice to get
\[ f'(s)= 2 \overrightarrow{x_N M(s)} \cdot \overrightarrow{\t}(s),\quad f''(s)=2\Bigl(1+\r(s) \overrightarrow{x_N M(s)} \cdot \overrightarrow{\n}(s)\Bigl).\]
The previous computation holds even if $\r(s)=0$.

By assumption, $f$ is minimal for $s_{1,N}$ and $s_{2,N}$, so
\[ f'(s_{1,N})=0=f'(s_{2,N}),\]
but \eqref{near} implies that
\[ f''(s) \geq 1,\: \forall s \in [s_{1,N}, s_{2,N}] \]
which is impossible. This contradiction allows us to end this proof.
\end{proof}

\begin{remark}\label{rem perp}
In the previous proof, we infer from $f'(s_{1,N})=0$ that $\overrightarrow{x_N M(s_{1,N})} \cdot \overrightarrow{\t}(s_{1,N})=0$ when $M(s_{1,N})$ is the closest point of $\Cc$ to $x_N$.
\end{remark}

Now, we link the differentiability of $g\: : \: x\mapsto {\rm d}(x,\Cc)$ and the previous lemma.

\begin{lemma} \label{d diff}
Let $\Cc$ be a $C^2$ injective curve and $\tilde n_0$ from Lemma \ref{unique-min}. Then for all $x\in \R^3\setminus\Cc$ such that ${\rm d}(x,\Cc) <1/\tilde n_0$, $g$ is differentiable in $x$ and
\[ \na g(x) =  \frac{x-y}{\|x-y\|},\]
where $y$ is the unique point of $\Cc$ such that $\|x-y\|={\rm d}(x,\Cc)$.
\end{lemma}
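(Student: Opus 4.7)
The plan is to obtain the first-order Taylor expansion
\[
g(x+h) = g(x) + \frac{x-y}{\|x-y\|}\cdot h + o(\|h\|)
\]
by sandwiching $g(x+h)^2$ between two explicit quantities. Fix $x$ with ${\rm d}(x,\Cc) < 1/\tilde n_0$, let $y$ be its unique nearest point on $\Cc$ from Lemma \ref{unique-min}, and for $h$ small enough that ${\rm d}(x+h,\Cc) < 1/\tilde n_0$ still holds (which is ensured by the triangle inequality), denote by $y_h$ the corresponding unique nearest point to $x+h$.

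Using $y$ as a competitor for the distance from $x+h$ to $\Cc$ gives the upper bound
\[
g(x+h)^2 \leq \|x+h-y\|^2 = g(x)^2 + 2(x-y)\cdot h + \|h\|^2.
\]
For the matching lower bound, I would expand with $y_h$ instead:
\[
g(x+h)^2 = \|x-y_h\|^2 + 2(x-y_h)\cdot h + \|h\|^2 \geq g(x)^2 + 2(x-y_h)\cdot h + \|h\|^2,
\]
using $\|x-y_h\| \geq g(x)$. Writing $(x-y_h) = (x-y) + (y-y_h)$ isolates the linear term $(x-y)\cdot h$ and leaves a remainder controlled by $\|y-y_h\|\cdot \|h\|$. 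Combining both inequalities and taking the square root (valid to first order since $g(x) > 0$) yields the desired expansion, whence $\na g(x) = (x-y)/\|x-y\|$.

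The key point to justify is that the remainder $(y-y_h)\cdot h$ is actually $o(\|h\|)$, which reduces to showing that $y_h \to y$ as $h \to 0$. I would argue by compactness: along any sequence $h_k \to 0$, the points $y_{h_k}$ lie in the compact set $\Cc$, so a subsequence converges to some $\tilde y \in \Cc$; passing to the limit in $\|x+h_k - y_{h_k}\| \leq \|x+h_k - z\|$ for every $z \in \Cc$ forces $\tilde y$ to minimize $\|x-\cdot\|$ on $\Cc$. The uniqueness provided by Lemma \ref{unique-min} then identifies $\tilde y = y$, so the full limit $y_h \to y$ holds, and Cauchy-Schwarz completes the estimate of the remainder.
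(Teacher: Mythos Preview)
Your proof is correct and follows essentially the same route as the paper: the identical sandwich on $g(x+h)^2$ using $y$ as competitor for the upper bound and expanding about $y_h$ for the lower bound, with the remainder handled via $y_h\to y$. The only cosmetic difference is in how $y_h\to y$ is established: the paper argues by contradiction (introducing the compact set $G=\{z\in\Cc:\|z-y\|\ge\delta\}$ and using continuity of $g$), whereas you use the more direct sequential-compactness-plus-uniqueness argument, which is equally valid and slightly cleaner.
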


\begin{proof} This proof can be found in the exercise book \cite{gonnord} and we copy it for a sake of completeness.
We fix $x\in \R^3\setminus\Cc$ such that ${\rm d}(x,\Cc) <1/\tilde n_0$, then Lemma \ref{unique-min} states that there exists a unique $y\in \Cc$ verifying $\|x-y\|={\rm d}(x,\Cc)$.

The first step consists to show the following property: for all $h\in \R^3$ small enough, we set $y_h$ by the unique point of $\Cc$ such that $\|x+h-y_h\|={\rm d}(x+h,\Cc)$, then
\begin{equation}\label{limit y_h}
\|y_h-y \| \to 0 \text{ as } h\to 0.
\end{equation}
Let us assume that \eqref{limit y_h} is false, then there exist a sequence $h_n \to 0$ and $\d>0$ such that $\|y_{h_n}-y \| \geq \d$ for all $n$. We introduce the following compact set
\[ G:= \{ z\in \Cc \: \vert \: \|z-y\|\geq \d\}\]
and the function
\[h\: : \: z\in G \mapsto \|x-z\|^2.\]
This function is continuous and reaches his minimum. As $y\notin G$, the uniqueness part of Lemma \ref{unique-min} allows us to check that this minimum is strictly greater than $\|x-y\|^2$.
Hence, we have
\[ {\rm d}(x,G)> {\rm d}(x,\Cc).\]
However, we use the fact that $y_{h_n}$ belongs to $G$ to write
\[g(x+h_n)= {\rm d}(x+h_n,\Cc)=\|x + h_n - y_{h_n}\|={\rm d}(x+h_n,G)\]
implying that
\[ g(x+h_n) \to {\rm d}(x,G)> g(x)\]
which is impossible by the continuity of $g$. The property \eqref{limit y_h} follows from this contradiction.

\medskip
Let us prove in the second step that \eqref{limit y_h} implies that $g$ is differentiable. On the one hand, we have that
\begin{equation}\label{g sup}
 g^2(x+h) \leq \|x+h-y\|^2= g^2(x) + 2 (x-y)\cdot h + \|h\|^2.
\end{equation}
On the other hand, we compute
\begin{eqnarray*}
g^2(x+h)= \|x+h-y_h \|^2 &=& \| x-y_h\|^2 + 2 (x-y_h)\cdot h + \| h \|^2\\
&\geq& \| x-y \|^2 + 2 (x-y)\cdot h + 2 (y-y_h)\cdot h + \| h \|^2.
\end{eqnarray*}
Hence, \eqref{limit y_h} implies that
\begin{equation}\label{g inf}
g^2(x+h) \geq g^2(x) + 2 (x-y)\cdot h + o(h).
\end{equation}
Putting together \eqref{g inf}-\eqref{g sup}, we conclude that $g^2$ is differentiable and that
\[ \nabla (g^2(x))=2g(x)\na g(x) = 2 (x-y)\]
which ends the proof.
\end{proof}

From the value of $\na g$, we see that the uniqueness part of Lemma \ref{unique-min} is mandatory to obtain the differentiability of $g$ (or of $g^2$).

Finally, we can establish the estimates of the cutoff function by proving Proposition \ref{cutoff est}.

\medskip

Thanks to the definition and Lemma \ref{d diff}, we compute for $n>2 \tilde n_0$
\[ \na \y_n(x)= n \chi' \Bigl( n {\rm d}(x,\Cc) \Bigl) \na {\rm d}(x,\Cc)= n \chi' \Bigl( n {\rm d}(x,\Cc) \Bigl)  \frac{x-y}{\|x-y\|},\]
where $y$ is the unique point of $\Cc$ such that $\|x-y\|={\rm d}(x,\Cc)$. Indeed, $\chi' \Bigl( n {\rm d}(x,\Cc) \Bigl)\neq 0$ iff $1/n\leq {\rm d}(x,\Cc)\leq 2/n$, so $x\in \R^3\setminus \Cc$ and  ${\rm d}(x,\Cc)< 1/\tilde n_0$ which allows us to apply the previous lemmas.

Hence, we obtain directly the first point:
\[\| \na \y_n \|_{L^\infty(\R^3)}\leq n \| \chi' \|_{L^\infty(\R)}.\]

\medskip

Concerning the support, we use Remark \ref{rem perp} to note that for all $x\in \supp (1 -\y_n)$, then the unique $y\in \Cc$ such that ${\rm d}(x,y)={\rm d}(x,\Cc)$ verifies $\overrightarrow{xy}\cdot \overrightarrow{\t}=0$. If there is an interval where $\r(s)=0$, then it implies that $\Cc$ on this part is a segment. Then, it is obvious that the volume such that $ {\rm d}(x,\Cc)\leq 2/n$ is $\mathcal{O}(1/n^2)$ on this section. So, without loss of generality, we assume that $\r(s)\neq 0$ on $\Cc$, and the previous remark implies that
\[ \supp{(1-\y_n)} = \{ \Cc(s) + r\cos\f \overrightarrow{\n}(s)+r\sin\f \overrightarrow{\b}(s) \: \vert \: (s,r,\f)\in [0,L]\times [0,2/n]\times [0,2\pi)    \} \bigcup_{i=1,2} O_i  \]
and the application $\P$: $(s,r,\f)\mapsto \Cc(s) + r\cos\f \overrightarrow{\n}+r\sin\f \overrightarrow{\b}$ is a diffeomorphism invertible where $\p^{-1}(M)$ gives the parameter of $\Cc(s)$ where ${\rm d}(M,\Cc)$ is reached, the distance ${\rm d}(M,\Cc)$, and the angle between $\overrightarrow{\Cc(s)M}$ and $\overrightarrow{\n}$. In the expression of $\supp (1-\y_n)$, $O_i$ denotes the half ball on each endpoints, which obviously verify ${\rm mes}\: O_i=\mathcal{O}(1/n^3)$.

 Using the general relation
\[ \df{\overrightarrow{\b}}{s}=\th(s) \overrightarrow{\n}(s), \quad \df{\overrightarrow{\n}}{s}=-\r(s) \overrightarrow{\t}(s)-\th(s) \overrightarrow{\b}(s),\]
where $\theta$ is a smooth function called the torsion, then we can compute $D\P$ in the orthonormal base $(\overrightarrow{\t},\overrightarrow{\n},\overrightarrow{\b})$:
\[D\P(r,s,f) = \begin{pmatrix} 1- r \r\cos \f&0&0 \\ r \th \sin \f  & \cos \f & -r \sin \f \\ -r \th \cos \f & \sin \f & r \cos\f \end{pmatrix}\]
which implies that $|\det D\P(r,s,f)| \leq  r/2$ if $r$ is chosen small enough (i.e. $r\leq 1/(2 \r_M)$).
Finally, we can compute for $n$ big enough:
\begin{eqnarray*}
 {\rm mes}\Bigl( \supp{(1- \y_n)}\Bigl) &=& \int_{\supp{(1-\y_n)}} 1 \, dxdydz= \int_0^L \int_{0}^{2/n} \int_0^{2\pi} |\det D\P(r,s,f)| \, d\f dr ds +\mathcal{O}(1/n^3)\\
 &\leq& \int_0^L \int_{0}^{2/n} \int_0^{2\pi} r/2 \, d\f dr ds +\mathcal{O}(1/n^3) = \frac{2\pi L}{n^2}+\mathcal{O}(1/n^3) 
\end{eqnarray*}
which ends the proof of Proposition \ref{cutoff est}.

\bigskip

\noindent
 {\bf Acknowledgements.} I am partially supported by the Agence Nationale de la Recherche, Project MathOc\'ean, grant ANR-08-BLAN-0301-01 and by the Project ``Instabilities in Hydrodynamics'' funded by Paris city hall (program ``Emergences'') and the Fondation Sciences Math\'ematiques de Paris.

\end{document}